\renewcommand{\epsilon}{\varepsilon}
\renewcommand{\phi}{\varphi}
\numberwithin{equation}{section}
\numberwithin{figure}{section}
\numberwithin{table}{section}
\theoremstyle{theorem}
\newtheorem{theorem}{Theorem}[section]
\newtheorem{lemma}[theorem]{Lemma}
\newtheorem{corollary}[theorem]{Corollary}
\newtheorem{proposition}[theorem]{Proposition}
\newtheorem{notation}[theorem]{Notation}
\theoremstyle{definition}
\newtheorem{definition}[theorem]{Definition}
\newtheorem{remark}[theorem]{Remark}
\newtheorem{construction}[theorem]{Construction}
\def\C{{\mathbb C}}
\def\O{{\mathbb O}}
\def\P{{\mathbb P}}
\def\R{{\mathbb R}}
\def\Z{{\mathbb Z}}
\def\cD{{\mathcal D}}
\def\cO{{\mathcal{O}}}
\def\cP{{\mathcal{P}}}
\def\cQ{{\mathcal{Q}}}
\def\cY{{\mathcal Y}}
\def\fg{{\mathfrak g}}
\def\fh{{\mathfrak h}}
\def\fp{{\mathfrak p}}
\def\fb{{\mathfrak b}}
\def\SL{\operatorname{Sl}}
\def\SP{\operatorname{Sp}}
\def\conj{\operatorname{conj}}
\def\DA{{\rm A}}
\def\DB{{\rm B}}
\def\DC{{\rm C}}
\def\DD{{\rm D}}
\def\DE{{\rm E}}
\def\DF{{\rm F}}
\def\DG{{\rm G}}
\def\Aut{\operatorname{Aut}}
\def\Hom{\operatorname{Hom}}
\def\Pic{\operatorname{Pic}}
\def\id{\operatorname{id}}
\def\rank{\operatorname{rank}}
\def\deg{\operatorname{deg}}
\def\Comp{\operatorname{Comp}}
\def\Lie{\operatorname{Lie}}
\begin{document}

\title[Minimal BW for $\C^*$-actions on generalized Grassmannians]{Minimal bandwidth for $\C^*$-actions on generalized Grassmannians}

\author[Alberto Franceschini]{Alberto Franceschini}
\thanks{Supported by the PhD school of the Department of Mathematics of University of Trento}
\address{Dipartimento di Matematica, Universit\'a di Trento, via
Sommarive 14, I-38123 Povo di Trento (TN), Italy}
\email{alberto.franceschini@unitn.it}

\subjclass[2010]{Primary 14M17; Secondary 14J45, 14L30, 14N05}

\begin{abstract}
The bandwidth of a $\C^*$-action of a polarized pair $(X,L)$ is a natural measure of its complexity. In this paper, we study $\C^*$-actions on rational homogeneous spaces, determining which provide minimal bandwidth. We prove that the minimal bandwidth is linked to the smallest coefficient of the fundamental weight, in a base of simple roots, which describes the variety as a marked Dynkin diagram. As a direct application of the results we study the Chow groups of the Cayley plane $\DE_6(6)$.
\end{abstract}

\maketitle


\section{Introduction}\label{sec:intro}

GKM-theory was introduced in \cite{article:gkm} to study the topology of a manifold $M$, using the \emph{skeleton structure} of fixed points and 1-dimensional orbits provided by the action of a torus $S^1 \times \ldots \times S^1$.
On the algebraic side, an analogue theory for complex projective varieties has been developed in \cite{BuWeWi}, using algebraic tori $\C^* \times \ldots \times  \C^*$.

\subsection{Contents of the paper}
Rational homogeneous varieties (\emph{RH varieties} for short) appear to be the most natural examples to test the tools of the theory. 
Given a semisimple group $G$, we may consider the restriction to a maximal torus $H \subset G$ of the natural $G$-action on the RH variety $G/P$. The skeleton structure for the $H$-action on $G/P$ consists of a polytope $\Delta(G/P) \subset \R^n$ of isolated fixed points and information about the decomposition of the (co)tangent space at these points.
Then, chosen a $1$-dimensional torus $\C^* \subset H$, the action will be encoded on the projection of the polytope to a particular $1$-dimensional lattice (see Construction \ref{con:downgrading}). The extremal values of the projection of $\Delta(G/P)$ are denoted by $\mu_{\text{max}}$ and $\mu_{\text{min}}$. The \emph{bandwidth} of the action is defined to be $|\mu|=\mu_{\text{max}}-\mu_{\text{min}}$ (see \cite{RomanoW} for the original paper).

It seems natural that the projection which provides the minimal bandwidth must be linked to the symmetries of the polytope $\Delta(G/P)$, which is given by the Coxeter polytope associated to the marked Dynkin diagram (see \cite{montagard2009regular} for an introduction to the topic). The symmetries of $\Delta(G/P)$ are given by some hyperplanes $h_i$ orthogonal to a base of \emph{simple roots} $\alpha_i$ of the root system $\Phi(G,H)$. 

We can restate the problem in terms of convex geometry.
	In fact, computing the minimal bandwidth for $\C^*$-actions on $X$ is equivalent to find a projection of minimum length of $\Delta(G/P)$ on a line. Moreover, in the language of \cite[Chapter 13]{polyCoxeter}, $\C^*$-actions of minimal bandwidth correspond to slicing the polytope $\Delta(G/P)$ with the minimum number of slices.
	\begin{figure}[ht!]
	\centering

\tikzset{every picture/.style={line width=0.75pt}} 

\begin{tikzpicture}[x=0.65pt,y=0.65pt,yscale=-1,xscale=1]

\draw  [dash pattern={on 4.5pt off 4.5pt}][line width=0.75]  (20,60) -- (140,60) -- (140,180) -- (20,180) -- cycle ;
\draw  [dash pattern={on 4.5pt off 4.5pt}][line width=0.75]  (70,20) -- (190,20) -- (190,140) -- (70,140) -- cycle ;
\draw  [fill={rgb, 255:red, 0; green, 0; blue, 0 }  ,fill opacity=1 ][line width=0.75]  (15,180) .. controls (15,177.24) and (17.24,175) .. (20,175) .. controls (22.76,175) and (25,177.24) .. (25,180) .. controls (25,182.76) and (22.76,185) .. (20,185) .. controls (17.24,185) and (15,182.76) .. (15,180) -- cycle ;
\draw  [fill={rgb, 255:red, 0; green, 0; blue, 0 }  ,fill opacity=1 ][line width=0.75]  (185,20) .. controls (185,17.24) and (187.24,15) .. (190,15) .. controls (192.76,15) and (195,17.24) .. (195,20) .. controls (195,22.76) and (192.76,25) .. (190,25) .. controls (187.24,25) and (185,22.76) .. (185,20) -- cycle ;
\draw [line width=2.25]    (20,60) -- (140,180) ;
\draw [line width=2.25]    (70,140) -- (140,180) ;
\draw [line width=2.25]    (20,60) -- (70,140) ;
\draw [line width=2.25]    (70,20) -- (190,140) ;
\draw [line width=2.25]    (70,20) -- (140,60) ;
\draw [line width=2.25]    (140,60) -- (190,140) ;
\draw  [dash pattern={on 4.5pt off 4.5pt}] (230,60) -- (350,60) -- (350,180) -- (230,180) -- cycle ;
\draw  [dash pattern={on 4.5pt off 4.5pt}] (280,20) -- (400,20) -- (400,140) -- (280,140) -- cycle ;
\draw [line width=2.25]    (280,140) -- (350,180) ;
\draw [line width=2.25]    (280,20) -- (350,60) ;
\draw [line width=2.25]    (230,60) -- (230,180) ;
\draw [line width=2.25]    (280,20) -- (280,140) ;
\draw [line width=2.25]    (350,60) -- (350,180) ;
\draw [line width=2.25]    (400,20) -- (400,140) ;
\draw  [line width=2.25]  (440,50) -- (560,50) -- (560,170) -- (440,170) -- cycle ;
\draw  [line width=2.25]  (490,10) -- (610,10) -- (610,130) -- (490,130) -- cycle ;
\draw [line width=0.75]  [dash pattern={on 4.5pt off 4.5pt}]  (20,60) -- (70,20) ;
\draw [line width=0.75]  [dash pattern={on 4.5pt off 4.5pt}]  (140,60) -- (190,20) ;
\draw [line width=0.75]  [dash pattern={on 4.5pt off 4.5pt}]  (20,180) -- (70,140) ;
\draw [line width=0.75]  [dash pattern={on 4.5pt off 4.5pt}]  (140,180) -- (190,140) ;
\draw [line width=0.75]  [dash pattern={on 4.5pt off 4.5pt}]  (230,60) -- (280,20) ;
\draw [line width=0.75]  [dash pattern={on 4.5pt off 4.5pt}]  (350,60) -- (400,20) ;
\draw [line width=0.75]  [dash pattern={on 4.5pt off 4.5pt}]  (230,180) -- (280,140) ;
\draw [line width=0.75]  [dash pattern={on 4.5pt off 4.5pt}]  (350,180) -- (400,140) ;
\draw [line width=0.75]  [dash pattern={on 4.5pt off 4.5pt}]  (440,50) -- (490,10) ;
\draw [line width=0.75]  [dash pattern={on 4.5pt off 4.5pt}]  (560,50) -- (610,10) ;
\draw [line width=0.75]  [dash pattern={on 4.5pt off 4.5pt}]  (440,170) -- (490,130) ;
\draw [line width=0.75]  [dash pattern={on 4.5pt off 4.5pt}]  (560,170) -- (610,130) ;

\end{tikzpicture}

\caption{
Slicing a cube: vertex-first, edge-first and face-first. 
The three ways of slicing the cube reflect the downgradings of the action of a maximal torus $H \subset \SP_6(\C)$ on the variety of $2$-dimensional isotropic subspaces of $\C^5$.}
	\end{figure}
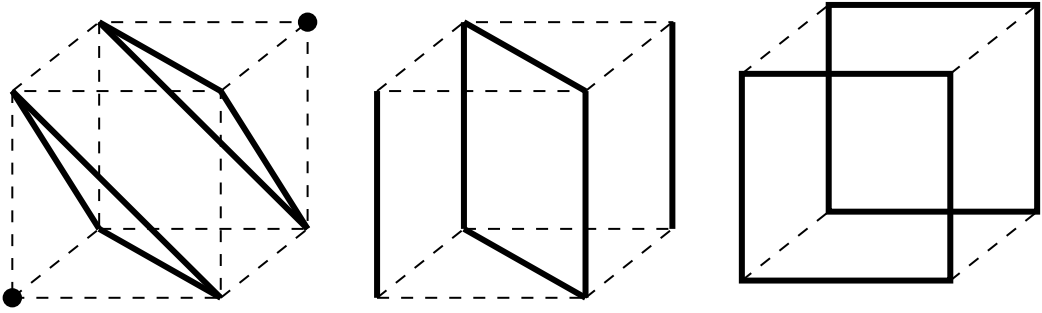

The main result of this paper is the following.

\begin{theorem}\label{thm:main}
	Let $\C^*$ act on the polarized pair $(X,L)$, where $X=\cD(j)$ is an RH variety with Picard number one given by a connected Dynkin diagram $\cD$ and one of its nodes $\alpha_j$, $L=\cO_X(1)$ is the (very) ample line bundle given by the fundamental weight $\omega_j$. Then the minimal bandwidth and the projections for which the minimal value is attained are given in Table \ref{tab:minBW}, using the notation developed in Section \ref{sec:Zgradings}.
	\begin{table}[ht!]
	\centering
	\caption{Minimal bandwidth for a $\C^*$-action on a generalized Grassmannian $\cD(j)$, where $\cD$ is the Dynkin diagram marked at the $j$-th node.}
	\begin{tabular}{c|c|c}
	\bf Generalized Grassmannian & \bf Minimal bandwidth & \bf Downgrading along $\mathbf{\alpha_i}$ \\
	\hline
	\hline
		$\DA_n(1) \simeq \DA_n(n)$ & $1$ & $\alpha_1,\ldots,\alpha_n$ \\
		$\DA_n(j)$ with $2 \le j \le n-1$ & $1,\ldots,1$ & $\alpha_1,\alpha_n$ \\
		\hline
		$\DB_n(1)$ & $2$ & $\alpha_1,\ldots,\alpha_n$ \\
		$\DB_n(j)$ with $j\ge 2$ & $2,\ldots,2,1$ & $\alpha_1$ \\
		\hline
		$\DC_n(1)$ & $1$ & $\alpha_n$ \\
		$\DC_n(2)$ & $2$ & $\alpha_1,\alpha_n$ \\
		$\DC_n(j)$ with $j\ge 3$ & $2,\ldots,2$ & $\alpha_1$ \\
		\hline
		$\DD_n(1)$ & $1$ & $\alpha_{n-1},\alpha_n$ \\
		$\DD_n(2)$ & $2$ & $\alpha_1,\alpha_{n-1},\alpha_n$ \\
		$\DD_n(j)$ with $3 \le j\le n-2$ & $2,\ldots,2$ & $\alpha_1$ \\
		$\DD_n(n-1) \simeq \DD_n(n)$ & $1$ & $\alpha_1,\alpha_4$ or $\alpha_3$ iff $n=4$\\
		\hline
		\hline
		$\DE_6(1) \simeq \DE_6(6)$ & $2$ & $\alpha_1,\alpha_2,\alpha_6$ \\
		$\DE_6(j)$ with $2 \le j \le 5$ & $2,3,4,3$ & $\alpha_1,\alpha_6$ \\
		\hline
		$\DE_7(j)$ with $j \le 5$ & $2,3,4,6,5$ & $\alpha_7$ \\
		$\DE_7(6)$ & $4$ & $\alpha_1,\alpha_7$ \\
		$\DE_7(7)$ & $2$ & $\alpha_1$ \\
		\hline
		$\DE_8(j)$ with $j \le 7$ & $4,6,8,12,10,8,6$ & $\alpha_8$ \\
		$\DE_8(8)$ & $4$ & $\alpha_1,\alpha_8$ \\
		\hline
		$\DF_4(1)$ & $4$ & $\alpha_1,\alpha_4$ \\
		$\DF_4(j)$ with $2 \le j \le 4$ & $6,4,2$ & $\alpha_1$ \\
		\hline
		$\DG_2(j)$ with $1 \le j \le 2$ & $2,4$ & $\alpha_2$ \\
		\hline
	\end{tabular}
	\label{tab:minBW}
\end{table}
\end{theorem}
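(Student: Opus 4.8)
The plan is to reduce the theorem to a finite, case-by-case computation governed by the combinatorics of the marked Dynkin diagram, and then to organize the cases uniformly. First I would recall from Section \ref{sec:Zgradings} the precise dictionary: a one-parameter subgroup $\C^* \subset H$ corresponds, up to sign and scaling, to a choice of cocharacter, and "downgrading along $\alpha_i$" means we take the cocharacter dual to the fundamental coweight $\omega_i^\vee$ (equivalently, the projection of $\Delta(X)$ onto the line spanned by $\alpha_i^\vee$). The key observation, which I expect to have been set up earlier, is that the vertices of $\Delta(\cD(j))$ are the Weyl-orbit $W\cdot\omega_j$ of the fundamental weight, and the value of the linear functional attached to the downgrading along $\alpha_i$ on a vertex $w\cdot\omega_j$ is the coefficient of $\alpha_i^\vee$ (equivalently $\alpha_i$, after identifying via the Cartan matrix) when $w\cdot\omega_j$ is written in the basis of simple roots. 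Hence the bandwidth of the downgrading along $\alpha_i$ equals
\[
\max_{w\in W}\,[\text{coeff. of }\alpha_i\text{ in }w\cdot\omega_j]\;-\;\min_{w\in W}\,[\text{coeff. of }\alpha_i\text{ in }w\cdot\omega_j].
\]

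Next I would prove the two halves of the statement separately. For the \emph{upper bound} (that the value in the table is achieved by the indicated downgrading), one computes, for each diagram $\cD$ and node $j$, the maximum and minimum of the $\alpha_i$-coefficient over the Weyl orbit of $\omega_j$. Here the highest weight $\omega_j$ itself has some known expansion $\omega_j=\sum_k c_k\alpha_k$ with $c_k\in\Q_{>0}$ (these are the entries of the inverse Cartan matrix, tabulated e.g. in Bourbaki), and the \emph{lowest} weight is $w_0\cdot\omega_j=-\omega_{j^*}$; since every weight in the orbit lies in the convex hull and the extreme values of a linear functional on a polytope are attained at vertices, one shows the extreme $\alpha_i$-coefficients are attained at $\omega_j$ and at $w_0\cdot\omega_j$ (or, when $\alpha_i$ is a symmetry hyperplane root, at a pair of vertices swapped by the corresponding reflection). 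This is where the "smallest coefficient of the fundamental weight" heuristic from the abstract enters: choosing $i$ so that $c_i$ is minimal tends to minimize the spread. For the \emph{lower bound} (no downgrading does better), I would argue that any cocharacter $u$ can be conjugated by $W$ into the closed fundamental chamber, and then show that for $u$ in the chamber the bandwidth is a nonnegative combination $\sum_i a_i\,(\text{bandwidth along }\alpha_i)$-type expression, or more directly that $\langle u,\omega_j\rangle-\langle u,w_0\omega_j\rangle=\langle u,\omega_j+\omega_{j^*}\rangle$ is bounded below by the minimum over $i$ of the single-root bandwidths, using that $\omega_j+\omega_{j^*}$ is a nonnegative combination of simple roots and $u$ is dominant. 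The symmetry considerations (the hyperplanes $h_i$ and diagram automorphisms such as $\DA_n$, $\DD_n$, $\DE_6$) then force the optimum onto the listed roots and account for the "$\alpha_1,\alpha_n$" type entries where several projections tie.

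I would then dispatch the exceptional and classical families: for $\DA_n,\DB_n,\DC_n,\DD_n$ there are closed formulas for $w\cdot\omega_j$ in the standard $\epsilon_i$ coordinates, so the max/min coefficient computation is an elementary optimization over sign patterns and subsets; for $\DE_6,\DE_7,\DE_8,\DF_4,\DG_2$ one computes the inverse Cartan matrix entries and the lowest-weight expansions directly (a finite check, matching Bourbaki's tables), and verifies the lower bound by the dominant-cocharacter argument above. The main obstacle, I expect, is the \emph{lower bound}: proving that the downgrading listed is genuinely optimal among \emph{all} one-parameter subgroups, not just among the "coordinate" downgradings along simple roots — this requires the convexity/chamber argument to be made tight, and in the exceptional cases may need a short direct verification that no interior cocharacter of the chamber beats the boundary ones, together with careful bookkeeping of the diagram symmetries to list exactly which roots attain the minimum. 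The routine part is the per-case arithmetic with inverse Cartan matrices, which I would relegate to a table mirroring Table \ref{tab:minBW}.
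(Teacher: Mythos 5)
Your proposal follows essentially the same route as the paper: reduce to the fundamental downgradings $\omega_i^\vee$ by conjugating an arbitrary cocharacter into the dominant chamber, observe that for such a downgrading the extremes of the weight map are attained at the highest weight $\omega_j$ and the lowest weight $w_\circ\omega_j=-\omega_{j^*}$ (the paper phrases this via the compass at $eP_j$ and $w_\circ P_j$, you via highest-weight convexity, but it is the same computation), and then read the coefficients of $\omega_j$ in the simple-root basis off Bourbaki's tables after rescaling from $\cO(k_j)$ to $\cO(1)$. Your formulation of the lower bound as $\langle u,\omega_j+\omega_{j^*}\rangle\ge\min_i b_i$ for dominant integral $u$ is in fact a slightly cleaner packaging of the paper's inequality, so no gap.
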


\subsection{Why should this be interesting?}

Combining the statement of Theorem \ref{thm:main} with the Bia\l ynicki-Birula decomposition (see \cite{article:bb,article:bb2} for the original decomposition or \cite[Theorem 4.2]{Carrell2002}, \cite[Theorem 3.1]{BuWeWi}),
\begin{equation}\label{eq:decoHomBB}
			H_\bullet(X,\Z)= \bigoplus_{i=1}^s H_{\bullet-2\nu_+(i)} (Y_i,\Z)=\bigoplus_{i=1}^s H_{\bullet-2\nu_-(i)} (Y_i,\Z),
\end{equation}
(see Section \ref{sec:prelim} for the notation) one can study the homology (and, a posteriori, cohomology and Chow) groups of not so well-known RH varieties (for example, the ones arising from exceptional groups), using the simpler homology ring of its fixed-point components. We will show in Theorem \ref{thm:trasvComp} that the fixed-point components of the fundamental $\C^*$-actions (see Construction \ref{con:fundamental}) are varieties with the action of a certain semisimple subgroup $G' \subset G$.

When these $G'$-varieties are RH (that are almost all the cases), our result allows us to write $H_\bullet(G/P,\Z)$ as a sum of shifted homology rings on RH $G'$-varieties, with the minimum number of summands. This decomposition may be potentially useful for enumerative-geometric problems related to RH varieties of exceptional type, for which explicit homological descriptions are not so well-known.

\subsection{Outline of the paper}
Section \ref{sec:prelim} is devoted to an introduction to the theory of RH varieties and torus actions, in particular $\C^*$-actions.
In Section \ref{sec:Zgradings} we study the properties of fundamental $\C^*$-actions, that provide the projections on the lines orthogonal to the hyperplanes $h_i$. Also the proof of the Theorem \ref{thm:main} is contained in this section. Then, in Section \ref{sec:examples} we describe the $\C^*$-actions on classical cases, combining Representation Theory with techniques of Projective Geometry. Last, in Section \ref{sec:Cayley} we apply our results to describe the Chow groups of the Cayley plane $\DE_6(6)$, restating, in particular, some results presented on \cite{ManivelCayleyPlane}.

\subsubsection*{\bf Acknowledgements} 
The author wants to thank his advisor Luis Sol\'a Conde for his assistance and suggestions during the writing of this paper. He thanks also Eleonora Romano and Lorenzo Barban for helpful discussions on Friday afternoons.


\section{Preliminaries}\label{sec:prelim}

A \emph{projective variety} $X$ is an integral, proper, separated scheme of finite type over $\C$ that admits an ample line bundle $L \in \Pic(X)$.

\subsection{Rational homogeneous spaces}\label{ssec:RH}

In this section we provide a short introduction to RH varieties. We refer to \cite{Landsberg2003, manivel2020topics, campanaSurvey} for details and to \cite{fultonHarris} for the Representation Theory part.

Let $G$ be a semisimple algebraic group and let $H \subset B \subset P$ be, respectively, a maximal torus, a Borel subgroup and a parabolic subgroup of $G$. We denote by $\fh\subset\fb\subset\fp\subset\fg$ their corresponding Lie algebras. Moreover, we define $M(H):=\Hom(H,\C^*)$ to be the group of characters of $H$. As usual, given $\lambda \in M(H)$ and $t \in H$, we write $t^\lambda:=\lambda(t)$. 

The choice of $H$ and $B$ determine a \emph{root system} $\Phi=\Phi(G,H) \subset M(H)$, a base of \emph{simple roots} $\{\alpha_1,\ldots,\alpha_n\}$ and a decomposition $\Phi=\Phi^+ \cup \Phi^-$ onto \emph{positive and negative roots} such that $\fg=\fh \oplus \bigoplus_{\alpha \in \Phi} \fg_\alpha$ and $\fb=\fh \oplus \bigoplus_{\alpha \in \Phi^+} \fg_{\alpha}$, where $\fg_\alpha$ denotes the eigenspace associated to the root $\alpha$:
\[
\fg_\alpha=\left\{v \in \fg : t \cdot v= t^\alpha v \text{ for all }t \in H\right\}.
\]
We denote by $W=W(G,H)$ the \emph{Weyl group} of $G$.

Such a root system $\Phi$ can be associated uniquely to a Dynkin diagram $\cD$, whose nodes corresponds to a base $\{\alpha_1,\ldots,\alpha_n\}$ and the labels correspond to the angles between these simple roots (see \cite[Part III]{humLie}). Moreover, any parabolic subgroup $P$ is determined by its Lie algebra $\fp$ and one can prove (see \cite[p. 394]{fultonHarris}) that every parabolic Lie algebra is, up to conjugation, of the form
\[
\fp=\fb \oplus \bigoplus_{\alpha \in \Phi^+ \setminus \Phi^+(J)} \fg_{-\alpha} 
\]
where $\Phi^+(J)=\{ \alpha \in \Phi^+ : \exists\, \alpha_j \in J \text{ such that } \alpha-\alpha_j \in \Phi^+ \cup \{0\}\}$
and $J \subset \{\alpha_1,\ldots,\alpha_n\}$ is a subset of cardinality equals to the Picard number of $G/P$, hence we write $P=P_J$.

\begin{definition}
	An \emph{RH variety} is a projective variety on which $G$ acts transitively. It is a quotient $G/P_J$ and we can represent it as a marked Dynkin diagram, using the nodes that define $J$:
\[
	\cD(J):=G/P_J,
\]
where $\cD$ is the Dynkin diagram for the Lie algebra $\fg$.
\end{definition}


We recall that a \emph{weight} $\omega \in M(H) \otimes \R$ is an element 
such that $\alpha^\vee(\omega):=2(\omega,\alpha)/(\alpha,\alpha) \in \Z$ for all $\alpha \in \Phi$ and we denote the set of weights, called the \emph{weight lattice}, by $\cP_\Phi$. 
In general, it holds $\Z\Phi \subset M(H) \subset \cP_\Phi$. 

Moreover, $\omega$ is \emph{dominant} if and only if $\alpha^\vee(\omega) \ge 0$ for all $\alpha \in \Phi$. 
If we denote by $\omega_1,\ldots,\omega_n$ the \emph{fundamental weights}, i.e. such that $\alpha_i^\vee(\omega_j)=\delta_{ij}$, then a dominant weight can be written as $\omega=\sum_{j=1}^n m_j\omega_j$ with $m_j \ge 0$.
Since the Weyl group $W$ acts transitively on the Weyl chambers (see \cite[Theorem 10.3]{humLie}), up to change the basis using $W$, we can suppose that a weight $\omega$ is always dominant.

From now on we suppose that $G$ is the \emph{simply-connected group} of $\fg$, that is the semisimple group such that $M(H)=\cP_\Phi$. In particular $\omega_j \in M(H)$. Because of this choice, we obtain a $1:1$ correspondence (see \cite[Theorem 23.16]{fultonHarris}) between line bundles on $\cD(J)$ and irreducible $G$-representations. In fact, given a line bundle $L$, there exists a unique dominant weight $\omega=\sum_{\alpha_j \in J} m_j \omega_j \in M(H)$ such that
\[
H^0(\cD(J), L)=V(\omega),
\]
where $V(\omega)$ is the irreducible $G$-representation with \emph{highest weight} $\omega$, i.e. $\omega- \lambda$ is positive for every $\lambda \in M_\omega$. Moreover, $\cD(J)$ is the unique closed $G$-orbit contained in $\P(V(\omega))$ (see \cite[Claim 23.52]{fultonHarris}).
A \emph{fundamental $G$-representation} is $V=V(\omega_j)$ and the corresponding line bundle $L$ is the generator of the Picard group of $\cD(j)$, called a \emph{generalized Grassmannian}.

\subsection{Torus actions: grid data}\label{ssec:gridsData}

In this section we recall results that can be found in \cite[Section 2]{BuWeWi} and then in \cite{RomanoW,smallBandwidth,occhetta2020high}. 

Let $X$ be a smooth projective variety of dimension $d$ and let $L \in \Pic(X)$ be an ample line bundle. We consider a non-trivial action of an algebraic torus $H\simeq (\C^*)^n$ on $X$ and we write $X^H=Y_1 \sqcup \ldots \sqcup Y_s$ for the decomposition of the fixed-point locus into smooth connected components (see \cite{iversen}). Moreover, we define $\cY:=\{Y_1,\ldots,Y_s\}$.
Now we consider a linearization of the action $\mu_L^H: H \times L \to L$, that always exists (see \cite[Proposition 2.4]{article:kklv} and the rest of the paper for an introduction).

\begin{remark}
We define a \emph{weight map} (denoted again by) $\mu_L^H: \cY \to M(H)$ as follow: take a fixed point $p \in Y \subset X^H$, then the $H$-action on the fiber $L_p$ produces $\mu_L^H(p) \in M(H)$. Moreover, given another $p' \in Y$, we have that $\mu_L^H(p')=\mu_L^H(p)=\mu_L^H(Y)$ (see \cite{MR664117} for the symplectic point of view of the story).

\end{remark}

\begin{notation}
	An $H$-action on $(X,L)$ means an $H$-action on $X$ and a linearization $\mu_L$ of the action on $L$, which will be intended as the corresponding weight map $\mu_L:\cY \to M(H)$ defined as above.
\end{notation}

\begin{definition}
	The convex-hull of $\mu_L^H(Y_1),\ldots,\mu_L^H(Y_s)$ is called the \emph{polytope of fixed points} 
\[
\Delta(X)=\Delta(X,L,H,\mu_L) \subset M(H)_\R.
\]
\end{definition}

Again, let $p \in X^H$ and consider the induced $H$-action on the cotangent space $T_p X^\vee$. This action produces a set of characters $\nu_1(p),\ldots,\nu_d(p) \in M(H)$ and these characters are again constant on the fixed-point component $Y \supset p$, hence we write $\nu_1(Y),\ldots,\nu_d(Y)$.

\begin{definition}
	The set of non-zero characters among $\nu_1(Y),\ldots,\nu_d(Y)$ is called the \emph{compass} of the action, denoted by $\Comp(Y,X,H)$.
\end{definition}

Note that $\dim Y$ is equal to the number of zero characters among $\nu_i(Y)$. 


At this point we are interested in polytope of fixed points and compasses of generalized Grassmannians when $H$ is a maximal torus.

\begin{proposition}[\cite{occhetta2020high}, Lemmas 3.1, 3.3]\label{prop:gridsGP}
Let $(X,L)$ be a polarized pair, where $X=\cD(j)$ is a generalized Grassmannian and $L \in \Pic X$ is the generator. Consider the action of a maximal torus $H \subset P_j \subset G$:
\[
H \times X \ni (h, gP_j) \mapsto hgP_j \in X.
\]
Then the fixed points and their compasses can be obtained as follows
	\begin{description}[noitemsep]
		\item[Polytope of fixed points] $X^H=\{wP_j: w \in W\}$ and the character on the fiber over $wP_i$ is $-w(\omega_j)$;
		\item[Compass] $\Comp(wP_j,X,H)=\{w(\alpha): \alpha=\sum_i m_i(\alpha)\alpha_i \in \Phi^+ \text{ and } m_j(\alpha)>0\}$.
	\end{description}
\end{proposition}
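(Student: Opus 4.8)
The plan is to compute both the weight data and the compass at the base point $o:=eP_j$ and then transport everything along the Weyl group, using that $wP_j=\dot w\cdot o$ for any lift $\dot w\in N_G(H)$ of $w\in W$. \emph{Fixed points.} Each $wP_j$ is $H$-fixed because $\dot w$ normalizes $H$, so $\dot w^{-1}H\dot w=H\subset P_j$. Conversely, if $gP_j\in X^H$ then $g^{-1}Hg$ is a maximal torus of $P_j$, hence $P_j$-conjugate to $H$; writing $p^{-1}g^{-1}Hgp=H$ with $p\in P_j$ gives $gp\in N_G(H)$, so $gP_j=gpP_j=\dot wP_j$ for the corresponding $w$. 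Thus $X^H=\{wP_j:w\in W\}$, a finite set in bijection with $W/W_{P_j}$ (here $W_{P_j}$ is the Weyl group of the Levi of $P_j$, generated by the $s_i$ with $i\neq j$). One may alternatively intersect $X^H$ with the Bruhat cells $B\dot wP_j/P_j$, each an affine space whose linear $H$-action has $\dot wP_j$ as its unique fixed point.

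\emph{Character on the fibre.} Using the embedding $X\hookrightarrow\P(V(k_j\omega_j))$, $gP_j\mapsto[g\cdot v_+]$ with $v_+$ a highest weight vector of weight $k_j\omega_j$, one has $o\mapsto[v_+]$; linearize the restriction $L$ of $\cO_{\P(V(k_j\omega_j))}(1)$ through the $G$-module $V(k_j\omega_j)$. The fibre $L_o$ is the one-dimensional $H$-module attached to the highest weight line, so with the sign convention of Section \ref{ssec:gridsData} we get $\mu_L(o)=k_j\omega_j$. For general $w$, $G$-equivariance makes $\dot w\colon L_o\to L_{wP_j}$ an isomorphism with $h\cdot(\dot w\,\xi)=\dot w\cdot\big((\dot w^{-1}h\dot w)\,\xi\big)$; since $h\mapsto\dot w^{-1}h\dot w$ realizes the $W$-action on $H$, the character of $H$ on $L_{wP_j}$ is $w(k_j\omega_j)$, i.e.\ $\mu_L(wP_j)=w(k_j\omega_j)$.

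\emph{Compass.} At $o$ the tangent space is $\fg/\fp_j$, and from $\fp_j=\fb\oplus\bigoplus_{\alpha\in\Phi^+\setminus\Phi^+(\{\alpha_j\})}\fg_{-\alpha}$ (Section \ref{ssec:RH}) one reads $\fg/\fp_j\cong\bigoplus_{\alpha\in\Phi^+(\{\alpha_j\})}\fg_{-\alpha}$, so the cotangent space $\fm_o/\fm_o^2$ carries exactly the $H$-characters $\{\alpha:\alpha\in\Phi^+(\{\alpha_j\})\}$, all nonzero. By the standard description of the nilradical of a maximal parabolic, $\Phi^+(\{\alpha_j\})=\{\alpha\in\Phi^+:m_j(\alpha)>0\}$: the inclusion $\subseteq$ is immediate from the defining condition $\alpha-\alpha_j\in\Phi^+\cup\{0\}$, and $\supseteq$ is a routine root-system fact (subtract simple roots other than $\alpha_j$ one at a time, staying inside $\Phi^+$, until $\alpha_j$ itself can be removed). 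Transporting by $\dot w$ exactly as above twists characters by $w$, giving $\Comp(wP_j,X,H)=\{w(\alpha):\alpha\in\Phi^+,\ m_j(\alpha)>0\}$, which is the asserted set up to the $k_j$-rescaling dictated by the ambient $\P(V(k_j\omega_j))$.

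The only genuinely delicate point is the bookkeeping of conventions: fixing a single sign for the linearization throughout, checking that $\dot w$ intertwines the two $H$-actions precisely after the $W$-twist (in both the fibre and the tangent-space computations), and tracking the factor $k_j$, which enters only because $G$ is the adjoint group of $\fg$ and so admits no $G$-equivariant projective embedding smaller than $\P(V(k_j\omega_j))$. All of the structural input — the Bruhat decomposition, highest-weight theory, and the root-space description of $\fg/\fp_j$ — is standard, so no step should present a serious obstacle.
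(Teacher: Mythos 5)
The paper does not actually prove this proposition: it is imported from \cite{occhetta2020high} (Lemmas 3.1, 3.3), so there is no internal proof to compare against. Your argument is the standard proof of the cited result and is essentially correct: conjugacy of the maximal tori of $P_j$ (or, alternatively, the Bruhat decomposition) identifies $X^H$ with the $W$-orbit of $eP_j$; equivariant transport of the highest-weight line by a lift $\dot w$ twists the fibre character by $w$, giving $w(k_j\omega_j)$; and the root-space decomposition of $\fg/\fp_j$ computes the compass at the base point, again transported by $\dot w$. The sign of the linearization and the factor $k_j$ in the compass are, as you note, pure bookkeeping: the cotangent action literally produces roots, and the $k_j$ is only there so that differences of the fibre weights $w(k_j\omega_j)$ are nonnegative integral multiples of compass elements, consistently with \cite[Corollary 2.18]{BuWeWi}.

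One step is stated too casually. You assert $\Phi^+(\{\alpha_j\})=\{\alpha\in\Phi^+:m_j(\alpha)>0\}$, calling the inclusion $\supseteq$ a routine fact. With the literal definition of $\Phi^+(J)$ given in Section \ref{ssec:RH} (requiring $\alpha-\alpha_j\in\Phi^+\cup\{0\}$) this inclusion fails in non-simply-laced type: in $\DB_2$ the root $\alpha_1+2\alpha_2$ has $m_1>0$, yet $\alpha_1+2\alpha_2-\alpha_1=2\alpha_2\notin\Phi^+\cup\{0\}$; your ``subtract simple roots other than $\alpha_j$ one at a time'' procedure does not repair this, since the condition demands removing $\alpha_j$ itself. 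The defect is really in the quoted definition (the paper silently switches to the condition $m_j(\alpha)>0$ in Remark \ref{rem:PZgrading}, which is the correct description of the nilradical of a maximal parabolic). The fix is to bypass the purported equivalence and compute directly with $\fg/\fp_j\cong\bigoplus_{\alpha\in\Phi^+,\,m_j(\alpha)>0}\fg_{-\alpha}$, e.g.\ via the $\Z$-grading of Section \ref{ssec:transversal}; with that substitution your compass computation goes through unchanged.
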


We stress out that a linearization is always defined up to the choice of a character. Because of this fact, we will use the following linearization:
\begin{equation}\label{eq:tuttoIntero}
	\mu_L^H(wP_j):=\omega_j-w(\omega_j).
\end{equation}
Thanks to \cite[Corollary 2.18]{BuWeWi}, we obtain $\mu_L^H(wP_j) \in \Z \Phi$.

\begin{construction}\label{con:downgrading}
Consider a subtorus $H' \subset H$, this inclusion induces a contravariant map:
\begin{equation}\label{eq:downgrading}
	\imath^*: M(H) \to M(H').
\end{equation}

We denote by $\cY'$ the set of fixed-point components for the action of $H'$ and by $\mu_L^{H'}: \cY' \to M(H')$ the correspondent character map.
This operation is called a \emph{downgrading} of the action.
\end{construction}


From now on, we will be interested in the case $H' \simeq \C^*$, so $M(H')\simeq\Z$.

\begin{definition}\label{def:sinkSource}
The \emph{source} and the \emph{sink} of the $\C^*$-action are the unique fixed-point components $Y_+, Y_- \in \cY$ such that for a generic $x \in X$ we have, respectively,
\[
\lim_{t \to 0} t\cdot x \in Y_+ \quad \text{and}\quad \lim_{t \to 0} t^{-1}\cdot x \in Y_-.
\]
Alternatively, (see \cite[Lemma 2.12]{BuWeWi}) take a vertex $\mu'_L(Y)$ of $\Delta(X,L,\C^*,\mu'_L)$:
	\begin{itemize}[noitemsep]
		\item $\Comp(Y,X,\C^*) \subset \Z_{>0}$, then $Y$ is the sink.
		\item $\Comp(Y,X,\C^*) \subset \Z_{<0}$, then $Y$ is the source.
	\end{itemize}
\end{definition} 

\begin{definition}
	Consider a $\C^*$-action on $(X,L)$. The \emph{bandwidth} of the $\C^*$-action is defined by
\[
|\mu_L^{\C^*}|=\mu_L^{\C^*}(Y_+)-\mu_L^{\C^*}(Y_-).
\]
\end{definition}

We consider the induced $\C^*$-action on the tangent space $T X_{|Y}$, which decomposes as
\[
T X_{|Y}= TY \oplus N(Y|X)= N^+(Y|X) \oplus TY \oplus N^-(Y|X).
\]
where $\C^*$ acts with positive, zero and negative weights and we write
\[
\nu_\pm(Y):= \rank N^\pm(Y|X).
\]

Last, we introduce a special class of $\C^*$-actions ona polarized pair $(X,L)$: the equalized ones. This condition appears in the hypotheses of \cite[Theorem 8.1]{smallBandwidth} and will be central in future papers. In Section \ref{sec:examples} we show a few examples of non-equalized $\C^*$-actions.

\begin{definition}
A $\C^*$-action on $X$ is \emph{equalized at a fixed-point component $Y$} if all the weights on $N(Y|X)$ are $\pm1$. An action is \emph{equalized} if it is equalized at every component.
\end{definition}

\begin{remark}[\cite{smallBandwidth}, Remark 2.13]\label{rem:equalized}
Let $C$ be the closure of a 1-dimensional orbit for the $\C^*$-action and let $f: \P^1 \to C$ be its normalization. We lift up the $\C^*$-action on $\P^1$ obtaining two points $y_+,y_-$ as source and sink and we denote by $\delta(C)$ the weight of the lifted action on $T_{y_+}\P^1$. The $\C^*$-action is equalized if and only if $\delta(C)=1$ for every closure of 1-dimensional orbit.
\end{remark}

\begin{lemma}[\cite{smallBandwidth}, Lemma 2.12]\label{lem:AMvsFM2}
Let $X$ be a projective smooth variety with $L \in \Pic(X)$. Denote by $Y_+$ and $Y_-$ the sink and the source of a $\C^*$-action on $(X,L)$ and by $C$ the closure of a general orbit. Then
\begin{equation}\tag{AM vs FM}
\mu_L^{\C^*}(Y_+)-\mu_L^{\C^*}(Y_-)=\delta(C) \deg f^*L.
\end{equation}
\end{lemma}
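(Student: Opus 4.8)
The plan is to establish the formula $\mu_L(Y_+)-\mu_L(Y_-)=\delta(C)\deg f^*L$ by analysing the $\C^*$-equivariant structure of $L$ restricted to the closure $C$ of a general orbit. The key observation is that a general orbit closure $C$ joins the source $Y_+$ to the sink $Y_-$; indeed, for generic $x\in X$ the limits $\lim_{t\to 0}t^{-1}\cdot x$ and $\lim_{t\to 0}t\cdot x$ lie in $Y_-$ and $Y_+$ respectively, so the orbit $\C^*\cdot x$ accumulates at a point $y_-\in Y_-$ and a point $y_+\in Y_+$, and $C=\overline{\C^*\cdot x}$ is a (possibly singular) rational curve containing both. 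Composing with the normalization $f:\P^1\to C$ we obtain a $\C^*$-action on $\P^1$ with source $y_+$ and sink $y_-$ (after lifting), and $f$ is $\C^*$-equivariant.

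First I would pull back $L$ along $f$ to get a $\C^*$-linearized line bundle $f^*L$ on $\P^1$. Since $\Pic(\P^1)=\Z$, we have $f^*L\simeq\cO_{\P^1}(m)$ where $m=\deg f^*L$. Now the point is to compute the difference of the weights of the $\C^*$-action on the fibers $(f^*L)_{y_+}$ and $(f^*L)_{y_-}$ in two ways. On one hand, because $f$ is equivariant and sends $y_+\mapsto Y_+$, $y_-\mapsto Y_-$, functoriality of the linearization gives that the weight of $\C^*$ on $(f^*L)_{y_\pm}$ equals $\mu_L(Y_\pm)$; hence the difference is $\mu_L(Y_+)-\mu_L(Y_-)$. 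On the other hand, I would compute this difference directly for a linearized $\cO_{\P^1}(m)$ with the given $\C^*$-action on $\P^1$: choosing coordinates so that $\C^*$ acts on $\P^1$ with weight $\delta(C)=\delta$ on $T_{y_+}\P^1$, a standard computation (using the Euler sequence, or the explicit description of linearizations of $\cO(m)$ on $\P^1$) shows that the fiber weights at the two fixed points differ exactly by $\delta\cdot m$. Putting the two computations together yields the claimed identity.

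The main technical point — and the step I expect to require the most care — is the bookkeeping of weights and the compatibility of the two linearizations. Specifically: (i) one must verify that the restriction of the ambient linearization $\mu_L$ to $C$, pulled back via $f$, is compatible with the linearization on $\cO_{\P^1}(m)$ up to the irrelevant ambiguity (a global character of $\C^*$), and that this ambiguity cancels when one takes the \emph{difference} $\mu_L(Y_+)-\mu_L(Y_-)$; and (ii) one must be careful about signs and orientation conventions, i.e. which fixed point is the source and which is the sink, so that the sign of $\delta(C)$ and the sign in $|\mu_L|$ match. Since Remark \ref{rem:equalized} already sets up the normalization $f:\P^1\to C$ and the definition of $\delta(C)$ as the weight on $T_{y_+}\P^1$, and Definition \ref{def:sinkSource} fixes that the source has its compass in $\Z_{>0}$, these conventions are pinned down, and the computation on $\P^1$ is elementary. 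The only genuinely nontrivial input is that a \emph{general} orbit closure indeed meets both $Y_+$ and $Y_-$, which follows from the Bia{\l}ynicki-Birula decomposition: $Y_+$ and $Y_-$ are the unique fixed components whose BB-cells are dense open in $X$, so a general point lies in both cells, forcing its orbit to limit into $Y_+$ and $Y_-$.
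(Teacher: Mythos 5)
Your proposal is correct, and it is essentially the standard argument: the paper itself imports this lemma from \cite{smallBandwidth} without reproving it, and the proof there is exactly your computation --- restrict to the closure of a general orbit, lift to the normalization $f:\P^1\to C$, and compare the fiber weights of the linearized $f^*L\simeq\cO_{\P^1}(\deg f^*L)$ at the two fixed points, which differ by $\delta(C)\deg f^*L$ while by equivariance they equal $\mu_L$ at the source and sink. Your flagged caveats (the character ambiguity cancelling in the difference, and the sign conventions for source versus sink) are exactly the right points to pin down, and the density of the Bia\l ynicki--Birula cells of the extremal components correctly justifies that a general orbit joins them.
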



\section{Downgradings along simple roots and associated $\C^*$-actions}\label{sec:Zgradings}

In this section we describe fundamental $\C^*$-actions on generalized Grassmannians, which is the main topic of this paper.

\subsection{$\Z$-gradings associated with simple roots}\label{ssec:gradings}

Let $X=G/P_j$ be a generalized Grassmannian and let $H \subset P_j \subset G$ be a maximal torus contained in the maximal parabolic subgroup $P_j$ of the simply-connected group $G$. 

\begin{construction}\label{con:fundamental}
Consider a $\C^*$-action on $X$, this is equivalent to give a map $\beta: \C^* \to \Aut(X)=G_{ad}$, where $G_{ad}$ is the adjoint group for the Lie algebra $\fg$. Thanks to the Jordan-Chevalley decomposition, we can restrict ourselves to consider the restriction of the map to a maximal torus
\[
\beta: \C^* \to H_{ad} \quad \Longrightarrow \quad \beta \in M(H_{ad})^\vee.
\]
Since $M(H_{ad})=\Z\Phi$, we obtain that $\beta$ can be thought as a map
\[
\beta: \Z \Phi \to M(\C^*) \simeq \Z.
\]
We will say that $H_i \subset H$ acts on $X$ with a \emph{fundamental $\C^*$-action} is the corresponding downgrading is as follow:
\begin{align*}
	\sigma_i: \Z \Phi & \to \Z \\
	\alpha_j & \mapsto \delta_{ij}.
\end{align*} 
Note that this construction generalize \cite[p. 38]{tevelev2006projective}. In particular, given a general $\C^*$-action on $X$, this will correspond to a linear combination $\sum_i q_i\sigma_i$ of fundamental $\C^*$-actions.

As remarked in Section \ref{sec:prelim}, up to change a change of basis, we can always suppose $q_i \ge 0$ (see \cite[p. 67]{humLie}).


\end{construction}


\subsection{The transversal group to a $\C^*$-action}\label{ssec:transversal}

Let $G$ be a semisimple group as above and let $H_i \subset H$ giving a fundamental $\C^*$-action on $X$ given by the downgrading along the simple root $\alpha_i$.
Consider the Cartan decomposition of $\fg=\fh \oplus \bigoplus_{\alpha \in \Phi} \fg_\alpha$, we obtain a $\Z$-grading on $\fg$ by setting
\[
\fg_0=\fh \oplus \bigoplus_{\substack{\alpha \in \Phi \\ m_i(\alpha)=0}} \fg_\alpha \quad \text{and} \quad \fg_m:=\bigoplus_{\substack{\alpha \in \Phi \\ m_i(\alpha)=m}} \fg_\alpha \text{ if }m \ne 0.
\]

\begin{remark}\label{rem:PZgrading}
We have that $\bigoplus_{m\ge 0} \fg_m$ is the Lie algebra of $P_i \subset G$:
\[
	\fp_i= \fh \oplus \bigoplus_{\alpha \in \Phi^+} \fg_{\alpha} \oplus \bigoplus_{\alpha \in \Phi^+ \setminus \Phi^+(i)} \fg_{-\alpha}=\fh \oplus \bigoplus_{\alpha \in \Phi^+} \fg_\alpha \oplus \bigoplus_{\substack{\alpha \in \Phi^+ \\ m_i(\alpha)=0}}\fg_{-\alpha}=\bigoplus_{m \ge 0} \fg_m.
\]
where we recall $\Phi^+(i)=\{ \alpha=\sum_{h=1}^n m_h(\alpha)\alpha_h \in \Phi^+: m_i(\alpha)>0\}$.
\end{remark}

The first part of the next statement can be found in \cite{tevelev2006projective}. The second part is a consequence of the previous remark, using the usual Levi decomposition of a parabolic subgroup.

\begin{proposition}[\cite{tevelev2006projective}, p.35]\label{thm:Tevelev}
	There exists  $G_0 \subset G$ reductive such that $\fg_0=\Lie(G_0)$, the Lie algebra associated to $G_0$. Moreover, $G_0$ is the Levi part of $P_i$ and the positive roots of the associated root system are $\Phi^+ \setminus \Phi^+(i)$.
\end{proposition}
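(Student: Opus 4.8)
The statement to prove is Proposition~\ref{thm:Tevelev}: given the $\Z$-grading on $\fg$ induced by the downgrading along a simple root $\alpha_i$, there is a reductive subgroup $G_0 \subset G$ with $\fg_0 = \Lie(G_0)$, which is the Levi part of $P_i$, and whose root system has positive roots $\Phi^+ \setminus \Phi^+(i)$.

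\bigskip

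The plan is to exhibit $G_0$ concretely as the centralizer of the cocharacter defining the grading, and then to identify it with the Levi factor of $P_i$. First I would let $\lambda: \C^* \to H$ be the one-parameter subgroup dual to $\omega_i^\vee$, so that $\Ad(\lambda(t))$ acts on $\fg_\alpha$ by $t^{m_i(\alpha)}$ and on $\fh$ trivially; concretely $\fg_m$ is the $t^m$-eigenspace. Set $G_0 := C_G(\lambda(\C^*))$, the centralizer of the image of $\lambda$. A standard fact (see e.g. \cite{humLie}, or the structure theory of reductive groups) is that the centralizer of a torus in a connected reductive group is connected and reductive, with maximal torus $H$ and root system $\Phi_0 = \{\alpha \in \Phi : \langle \alpha, \lambda\rangle = 0\} = \{\alpha \in \Phi : m_i(\alpha) = 0\}$. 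Differentiating, $\Lie(G_0) = \fh \oplus \bigoplus_{m_i(\alpha)=0}\fg_\alpha = \fg_0$, which gives the first claim. For the sign decomposition, I would observe that intersecting $\Phi_0$ with the fixed choice of positive roots $\Phi^+$ gives a system of positive roots for $\Phi_0$, namely $\Phi^+ \cap \Phi_0 = \{\alpha \in \Phi^+ : m_i(\alpha)=0\} = \Phi^+ \setminus \Phi^+(i)$, using the description $\Phi^+(i) = \{\alpha \in \Phi^+ : m_i(\alpha) > 0\}$ recalled in Remark~\ref{rem:PZgrading}.

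\bigskip

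Next I would tie this to the parabolic $P_i$. By Remark~\ref{rem:PZgrading}, $\fp_i = \bigoplus_{m\ge 0}\fg_m$, and the nilradical of $\fp_i$ is $\fu_i := \bigoplus_{m>0}\fg_m$, which is exactly $\Lie$ of the unipotent radical $R_u(P_i)$. The Levi decomposition $P_i = L_i \ltimes R_u(P_i)$ identifies the Levi factor $L_i$ as a reductive complement; since $\fg_0$ is a complement to $\fu_i$ inside $\fp_i$ and is itself reductive, and since $G_0 = C_G(\lambda(\C^*))$ normalizes $P_i$ (because $\lambda$ factors through $H$ and $P_i$ is a standard parabolic, stable under conjugation by its own torus, and $\fp_i$ is a sum of eigenspaces with $m\ge 0$, hence $\Ad(G_0)$-stable), $G_0$ is contained in $P_i$ and maps isomorphically onto the Levi quotient $P_i/R_u(P_i)$. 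Thus $G_0$ is (a choice of) the Levi part of $P_i$. Its root system, as computed above, has positive roots $\Phi^+ \setminus \Phi^+(i)$, completing the proof.

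\bigskip

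The main obstacle I anticipate is purely a matter of bookkeeping with conventions rather than a genuine mathematical difficulty: one must be careful that the grading is induced by $\omega_i^\vee$ — which records the $\alpha_i$-coefficient — so that $\fg_0$ collects precisely the roots $\alpha$ with $m_i(\alpha) = 0$, and that this matches the standard parabolic $P_i = P_{\{\alpha_i\}}$ with $J = \{\alpha_i\}$ as set up in Section~\ref{ssec:RH}. A second point needing a line of justification is the connectedness and reductivity of the centralizer of a subtorus of $H$; since $G$ is the adjoint semisimple group this is classical, but it should be cited (e.g. from \cite{humLie}) rather than reproved. Beyond that, the identification of $\fg_0$ with the Levi and of $\Phi^+ \setminus \Phi^+(i)$ with the positive system is immediate from the explicit descriptions already recorded in Remark~\ref{rem:PZgrading}.
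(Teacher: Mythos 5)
Your argument is correct and follows essentially the same route as the paper, which cites Tevelev for the existence and reductivity of $G_0$ and obtains the Levi identification from Remark \ref{rem:PZgrading} together with the Levi decomposition of $\fp_i=\bigoplus_{m\ge 0}\fg_m$ --- precisely the two steps you spell out by realizing $G_0$ as the centralizer of the cocharacter dual to $\omega_i^\vee$. The only point worth flagging is the index clash already present in the paper (Remark \ref{rem:PZgrading} writes $\fp_j$ while the grading is taken along $\alpha_i$), which you resolve in the right way by working with $P_i$ throughout.
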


It is a well-known fact that a reductive Lie algebra can be written as $\mathfrak{g}_0=\mathfrak{g}_0^\text{ss} \oplus \mathfrak{a}$ where $\mathfrak{g}_0^\text{ss}$ is semisimple and $\mathfrak{a}$ is abelian. 
We know that $\fh$ is generated by the coroots $\alpha_1^\vee, \ldots,\alpha_n^\vee$. Define $\mathfrak{h}^\perp$ as the Cartan algebra generated by all the coroots $\alpha^\vee_1,\ldots,\alpha^\vee_n$ but $\alpha_i^\vee$.
Then we can write
\[
\fg^\perp:=\fh^\perp \oplus \bigoplus_{\substack{\alpha \in \Phi^+ \\ m_i(\alpha)=0}} (\fg_{-\alpha} \oplus \fg_\alpha) \quad \text{and} \quad \fg_0=\fg^\perp \oplus \mathfrak{a}
\]
where $\fg^\perp$ is semisimple by construction and $\mathfrak{a}$ is the Lie algebra generated, as a vector space, by the coroot $\alpha_i^\vee$ (is abelian because it is a 1-dimensional Lie algebra).

\begin{definition}
	The \emph{transversal group} $G^\perp \subset G$ is the simply-connected semisimple algebraic group such that $\Lie(G^\perp)=\fg^\perp$.
\end{definition}

Let $V$ be an irreducible $G$-representation, which gives a natural representation of the Lie algebra $\fg \times V \to V$. We have again a $\Z$-grading induced by the downgrading along $\alpha_i$:
\[
	V = \bigoplus_{\lambda \in M(H)} V_\lambda =\bigoplus_{m \in \Z} V_m \quad \text{with } V_m=\bigoplus_{m_i(\lambda)= m} V_\lambda.
\]

\begin{remark}\label{rem2}
	Note that by construction every $V_m$ is $\fg^\perp$-invariant, hence a $G^\perp$-module.
\end{remark}

Now let $V=V(k_j\omega_j)$, hence $X\subset \P(V)$.
Again, the natural $G$-action  on $X$ descend to an $H$-action and then to a $H_i$-action. In particular
\[
X^{H_i}=X \cap \P(V)^{H_i}= \bigsqcup_{m \in \Z} (X \cap \P(V_m)).
\]
Let us note that in general $X \cap \P(V_m)$ is not necessarily connected. Consider the action of $G^\perp$: $X$ is $G'$-invariant and also $\P(V_m)$ is $G^\perp$-invariant, hence $X \cap \P(V_m)$ is $G^\perp$-invariant, so it is union of $G^\perp$-orbits. 

\begin{theorem}\label{thm:trasvComp}
It holds that $X \cap \P(V_m)$ is a finite union of disjoint $G^\perp$-orbits.
\end{theorem}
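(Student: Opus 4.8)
The plan is to reduce the statement to two facts: first, that $X \cap \P(V_m)$ is closed inside $\P(V_m)$ and $G'$-invariant, hence a finite union of $G'$-orbits since $G'$ is connected and thus each irreducible component is a union of orbits of strictly smaller dimension down to the closed orbit; second, that this union is in fact \emph{disjoint}, i.e. the scheme $X \cap \P(V_m)$ is smooth, which is the genuinely substantive point. For the first part I would argue as follows. We have $X = G/P_j \subset \P(V)$ as the unique closed $G$-orbit, and $\P(V_m)$ is a linear subspace of $\P(V)$ that is $G'$-invariant by Remark \ref{rem2}. Therefore $X \cap \P(V_m)$ is a projective $G'$-variety. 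A projective variety acted on by a connected algebraic group always contains a closed orbit, and by Noetherian induction on the (finitely many) irreducible components, stratifying by orbit dimension, one sees that a projective $G'$-variety consisting set-theoretically of finitely many orbits is exactly a finite union of $G'$-orbits. So the content is really to show there are only finitely many orbits and that they are disjoint as sets — equivalently, that the orbit closures do not meet.

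\textbf{The key step: identifying $X^{H_i}$ with $\bigsqcup_m (X\cap\P(V_m))$ and using smoothness of fixed loci.}
Here I would invoke the general structure of torus fixed-point loci. The torus $H_i \simeq \C^*$ acts on the smooth projective variety $X$, so by the classical result (cited in the excerpt, \cite{iversen}) the fixed locus $X^{H_i}$ is smooth, and its connected components are exactly the $Y\in\cY'$. On the other hand $\P(V)^{H_i} = \bigsqcup_m \P(V_m)$ as a \emph{disjoint} union of linear subspaces (distinct eigenvalues give disjoint projectivized eigenspaces), and intersecting with $X$ gives $X^{H_i} = \bigsqcup_m (X \cap \P(V_m))$ as written in the excerpt just before the theorem. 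Since $X^{H_i}$ is smooth, each $X\cap\P(V_m)$ is smooth, and in particular its irreducible components are disjoint. Now each component is a smooth projective $G'$-variety: since $G'$ centralizes $H_i$ and preserves each $V_m$, it preserves each connected component $Y$ of $X\cap\P(V_m)$; I would then show each such $Y$ is a single $G'$-orbit. For this, note that $Y$ is a smooth projective $G'$-variety; it suffices to exhibit a point of $Y$ whose $G'$-orbit is open (hence, being also closed by projectivity of $Y$ and connectedness of $Y$, all of $Y$). This follows because $G' \subset G_0$ and $G_0$ acts on each fixed component of the $\C^*$-action on $G/P_j$ transitively up to the central torus $\mathfrak a$ — indeed the fixed components of such gradings on $G/P_j$ are themselves homogeneous under the Levi $G_0$, and the central one-dimensional torus $\mathfrak a$ acts trivially on each $\P(V_m)$ (it acts by a single scalar there), so $G'$ alone already acts transitively on $Y$.

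\textbf{Main obstacle.}
The step I expect to require the most care is the last one: verifying that each connected component $Y = X \cap \P(V_m)_{\mathrm{conn}}$ is a \emph{single} $G'$-orbit rather than merely a finite union of them. Disjointness of the components is handed to us for free by smoothness of $X^{H_i}$, but transitivity of $G'$ (as opposed to $G_0$, which carries the extra central direction $\mathfrak a$) on a given component needs the observation that $\mathfrak a$ acts by scalars on $V_m$ and hence trivially on $\P(V_m)$. Once that is in place, the orbit-stratification argument for the connected projective $G'$-variety $Y$ forces $Y$ to be homogeneous: a connected smooth projective variety that is a finite union of orbits of a connected group, with a dense orbit, equals that orbit. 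Alternatively — and this may be the cleanest route — one can quote directly that the Bia\l ynicki-Birula fixed components of the $\C^*$-action on $G/P_j$ are RH varieties under $G_0$ (a standard fact, e.g. from the theory of Hermitian symmetric and more general cominuscule/parabolic gradings), so each $X\cap\P(V_m)$ is a (possibly disconnected) homogeneous space, hence a finite \emph{disjoint} union of $G'$-orbits. I would structure the written proof around: (i) the eigenspace decomposition of $\P(V)^{H_i}$ and its intersection with $X$; (ii) smoothness of $X^{H_i}$ giving disjointness; (iii) $G'$-invariance from Remark \ref{rem2} plus triviality of $\mathfrak a$ on each $\P(V_m)$; (iv) conclude homogeneity of each component.
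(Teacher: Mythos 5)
The substantive step -- that each irreducible component of $X \cap \P(V_m)$ is a \emph{single} $G'$-orbit -- is exactly where your argument has a gap. The principle you invoke, ``a connected smooth projective variety that is a finite union of orbits of a connected group, with a dense orbit, equals that orbit,'' is false: $\C^*$ acting on $\P^1$ gives a dense orbit together with two fixed points. Likewise, an open orbit is not ``also closed by projectivity and connectedness''; openness and density of one orbit never force homogeneity. What is actually needed is that the $G'$-orbit through \emph{every} point $x$ of $X\cap\P(V_m)$ is open in its component; then the orbits partition each connected component into open subsets and there can be only one. This uniform statement is what the paper proves, via the pointwise tangent-space identity $T_x\bigl(X\cap\P(V_m)\bigr)=T_x(G'\cdot x)$, which reduces to $\fg\cdot v\cap V_m=\fg'\cdot v \pmod{\langle v\rangle}$: the grading gives $\fg_k\cdot v\subset V_{m+k}$, so $\fg\cdot v\cap V_m=\fg_0\cdot v$, and $\mathfrak a\cdot v\subset\langle v\rangle$ since $\mathfrak a$ acts on $V_m$ by a scalar. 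Your proposal contains the last observation (the triviality of $\mathfrak a$ on $\P(V_m)$) but never establishes the ``at every point'' openness that makes the conclusion go through.

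Your fallback route -- quoting as a standard fact that the components of $(G/P_j)^{H_i}$ are homogeneous under the centralizer Levi $G_0$, and then descending from $G_0$ to $G'$ via the scalar action of $\mathfrak a$ -- is logically sound and would yield the theorem, but that standard fact is proved by precisely the tangent-space computation above, so it outsources rather than replaces the substantive step; if you take this route you should give a precise reference. Also note that the smoothness of $X^{H_i}$ and the disjointness of the eigenspaces $\P(V_m)$ are not really where the content lies: orbits of a group action are automatically disjoint, and the issue is finiteness of their number, i.e.\ openness of every orbit in its component.
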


\begin{proof}
	It is sufficient to prove that:
	\begin{equation}\label{11}
		T_x (X \cap \P(V_m))= T_x (G^\perp \cdot x) \quad \text{for every } x \in X \cap \P(V_m).
	\end{equation}
	Let $x \in \tilde{X} \subset X \cap \P(V_m)$ where $\tilde{X}$ is the irreducible component of $X \cap \P(V_m)$ containing $x$. Note that $G^\perp \cdot x \subset \tilde{X}$. If they have the same dimension, then $G^\perp \cdot x$ is dense in $\tilde{X}$.
	
	We know that $G^\perp \cdot x$ is smooth, but $\tilde{X}$ might not be in general, hence:
	\[
	\dim T_x (G^\perp\cdot x)=\dim (G^\perp \cdot x) \le \dim \tilde{X} \le \dim T_x \tilde{X} \le \dim T_x (X \cap \P(V_m)).
	\]
	If $T_x (X \cap \P(V_m)) = T_x (G^\perp \cdot x)$, then $\dim \tilde{X}=\dim G^\perp \cdot x$.
	
	Suppose that $G^\perp \cdot x \subset \tilde{X}$ is dense. If $\tilde{X} \setminus G^\perp \cdot x \ne \emptyset$, $\dim G^\perp \cdot x < \dim \tilde{X}$, and it will be $G^\perp$-invariant.
	Take $x' \in \tilde{X} \setminus G^\perp \cdot x$. With the same argument one shows that $G^\perp \cdot x' \subset \tilde{X}$ is dense and we get a contradiction. So $\tilde{X}=G^\perp \cdot x$ for all $x \in \tilde{X}$. It follows that $X$ is disjoint union of $G^\perp$-closed orbits.
	
	Let us prove \eqref{11}: $x \in \P(V)$ is of the form $x=[v]$, then
	\begin{align*}
		T_x (G^\perp \cdot x)&=\frac{T_v (G^\perp\cdot v)}{\langle v \rangle}=\frac{\fg^\perp\cdot v}{\langle v \rangle}\\
		T_x (X \cap \P(V_m))&\subset T_x X \cap T_x \P(V_m)=\frac{\fg\cdot v}{\langle v \rangle} \cap \frac{V_m}{\langle v \rangle}=\frac{\fg\cdot v \cap V_m}{\langle v \rangle}.
	\end{align*}
	It remains to prove that $\fg^\perp\cdot v = \fg \cdot v \cap V_m$ for all $v \in V_m$, but this follows from Remark \ref{rem2}, hence we have $T_x(X \cap \P(V_m)) \subset T_x(G^\perp \cdot x)$.
	For the other inclusion, since the irreducible component of $X \cap \P(V_m)$ containing $x$ also contains $G^\perp \cdot x$, then $T_x(G^\perp \cdot x) \subset T_x(X \cap \P(V_m))$.
\end{proof}

\subsection{Proof of Main Theorem}

Let $(X,L)$ be a polarized pair with an $H$-action, where $X=\cD(j)=G/P_j$ and $L$ is the generator of the Picard group.

This section contains proof of Theorem \ref{thm:main}, that describes the $\C^*$-actions of minimal bandwidth of a generalized Grassmannian. The proof will be obtained in two steps. In the first one we will show that the minimal bandwidth is achieved for one of the fundamental $\C^*$-actions described in Section \ref{ssec:gradings}. In the second step we will show that the bandwidth of the $\C^*$-action associated to $\sigma_i$ can be computed as the difference of the weights of the action on $L$ at two given points, corresponding to the identity and to the longest element $w_\circ$ of the Weyl group of $G$.



The next statement shows that the minimal bandwidth for a $\C^*$-action on $(X,L)$ is obtained by one of the fundamental $\C^*$-actions $H_i$.

\begin{proposition}
	Let $H \subset G$ be a maximal torus. 
	Let $H' \subset H$ be a 1-dimensional subtorus acting (non trivially) on $(X,L)$ with weight map $\mu_L^{H'}: \cY' \to \Z$. Then there exists $i_0$ such that
	\[
	|\mu_L^{H'}| \ge |\mu_L^{i_0}|,
	\]
	where $\mu_L^{i_0}: \cY_{i_0} \to \Z$ is the weight map induced by the fundamental $\C^*$-action of $H_{i_0}$ on $(X,L)$.
\end{proposition}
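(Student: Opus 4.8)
The plan is to reduce an arbitrary $1$-dimensional subtorus $H' \subset H$ to one of the fundamental subtori $H_i$ by a combination of convexity and the reflection symmetry of the polytope $\Delta(X)$. First I would recall that, via Construction \ref{con:downgrading}, the action of $H'$ is encoded by a functional $\omega^\vee = \sum_{i=1}^n c_i \omega_i^\vee \in M(H)^\vee$ with $c_i \in \Z_{\ge 0}$, and that by Proposition \ref{prop:gridsGP} the fixed points of $H$ are $\{wP_j : w \in W\}$ with $\mu_L(wP_j) = w(k_j\omega_j)$. Hence the bandwidth of the $H'$-action is
\[
|\mu'| = \max_{w,w' \in W} \big\langle \omega^\vee, w(k_j\omega_j) - w'(k_j\omega_j)\big\rangle = \max_{w \in W}\langle \omega^\vee, w(k_j\omega_j)\rangle - \min_{w \in W}\langle \omega^\vee, w(k_j\omega_j)\rangle,
\]
i.e. the width of the orbit polytope $\Delta(X) = \mathrm{conv}\{w(k_j\omega_j)\}$ in the direction $\omega^\vee$. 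The corresponding quantity for the fundamental action $H_i$ is the width of $\Delta(X)$ in the direction $\omega_i^\vee$. So the statement is the purely convex-geometric assertion: among all integral functionals, the minimal width of $\Delta(X)$ is attained on one of the $n$ co-fundamental directions $\omega_i^\vee$.

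The key step is an averaging/extremal-point argument using the $W$-symmetry of $\Delta(X)$. Since $\Delta(X)$ is $W$-invariant and $k_j\omega_j$ is dominant, the vertex of $\Delta(X)$ maximizing $\langle\omega^\vee,\cdot\rangle$ can, after applying a Weyl element, be taken to be $k_j\omega_j$ itself, and then $\min_w\langle\omega^\vee,w(k_j\omega_j)\rangle = \langle\omega^\vee, w_\circ(k_j\omega_j)\rangle$ when $\omega^\vee$ is itself chosen in the closed dominant cone (which we may do after replacing $\omega^\vee$ by a $W$-translate — this is exactly the reduction $c_i \ge 0$ quoted from \cite[p.67]{humLie}). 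Thus $|\mu'| = \langle \omega^\vee, k_j\omega_j - w_\circ(k_j\omega_j)\rangle = \sum_i c_i \langle \omega_i^\vee, k_j\omega_j - w_\circ(k_j\omega_j)\rangle = \sum_i c_i |\mu_i|$, where $|\mu_i|$ is the bandwidth of the fundamental action $H_i$ (this identity is precisely the content of the second step of the proof of Theorem \ref{thm:main}, computing the bandwidth as the difference of weights at $\mathrm{id}$ and $w_\circ$). Since each $|\mu_i| \ge 1$ (the action is non-trivial on the corresponding factor, so the projection has positive length) and the $c_i$ are non-negative integers not all zero, we get $|\mu'| = \sum_i c_i |\mu_i| \ge \min_i |\mu_i| =: |\mu_{i_0}|$, which is the claim.

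The main obstacle is justifying that the maximizing and minimizing vertices of $\Delta(X)$ in the direction $\omega^\vee$ can be simultaneously arranged to be $k_j\omega_j$ and $w_\circ(k_j\omega_j)$ — i.e. that once $\omega^\vee$ lies in the closed dominant Weyl chamber, the linear functional it defines attains its max on $\Delta(X)$ at the highest weight and its min at $w_\circ$ applied to it. This is the standard fact that a dominant functional pairs maximally with the dominant representative of a $W$-orbit (and minimally with its $w_\circ$-image), which follows from the description of the dominance order via positive roots: for dominant $\omega^\vee$ and any $w$, $\langle \omega^\vee, k_j\omega_j - w(k_j\omega_j)\rangle \ge 0$ because $k_j\omega_j - w(k_j\omega_j)$ is a non-negative combination of simple roots and $\langle\omega^\vee,\alpha_i\rangle = c_i \ge 0$. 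Care must be taken with the boundary case where $\omega^\vee$ lies on a wall of the chamber (some $c_i = 0$): there the max/min vertices are not unique, but the identity $|\mu'| = \sum_i c_i|\mu_i|$ still holds, since it only uses that $k_j\omega_j$ and $w_\circ(k_j\omega_j)$ are \emph{among} the extremal vertices. Once this is in place the inequality is immediate, and one reads off $i_0$ as the index minimizing $|\mu_i|$, matching the last column of Table \ref{tab:minBW}.
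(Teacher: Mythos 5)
Your argument is correct, and it establishes the inequality by a genuinely different route from the paper's. Both proofs begin with the same reduction --- replace $\omega^\vee$ by a $W$-translate so that $\omega^\vee=\sum_i c_i\omega_i^\vee$ with $c_i\ge 0$, and read the bandwidth off the projection of the weight polytope $\Delta(X)=\mathrm{conv}\{w(k_j\omega_j)\}$ --- but they diverge at the key step. The paper compares the two functionals pointwise: for each weight $\lambda=\sum_i m_i(\lambda)\alpha_i$ it asserts $\omega^\vee(\lambda)\ge\omega_{i_0}^\vee(\lambda)$ when $m_{i_0}(\lambda)\ge 0$ and the reverse inequality otherwise, where $i_0$ is any index with $c_{i_0}\ge 1$, and then compares maxima and minima. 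You instead locate the extremal vertices of $\Delta(X)$ in the direction of a dominant functional --- maximum at $k_j\omega_j$, minimum at $w_\circ(k_j\omega_j)$, via the standard fact that $k_j\omega_j-w(k_j\omega_j)$ and $w(k_j\omega_j)-w_\circ(k_j\omega_j)$ are nonnegative combinations of simple roots --- and deduce the exact identity $|\mu'|=\sum_i c_i|\mu_i|$, from which the inequality is immediate. Your route buys more: the identity is strictly stronger than the inequality, and, applied to $\omega_i^\vee$ itself, it already contains the second step of the paper's argument for Theorem \ref{thm:main} (the computation $|\mu_i|=\omega_i^\vee(\omega_j)-\omega_i^\vee(w_\circ\omega_j)$). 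It is also more robust: the paper's pointwise inequality $\sum_i c_i m_i(\lambda)\ge m_{i_0}(\lambda)$ is delicate for weights whose coordinates $m_i(\lambda)$ have mixed signs (which does occur for $w(k_j\omega_j)$ with $w\ne e$), whereas your argument only ever compares the two distinguished vertices.

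Two minor remarks. The assertion that $|\mu_i|\ge 1$ for every $i$ (equivalently, that every simple root occurs with positive coefficient in $\omega_j-w_\circ\omega_j$) is true for connected diagrams but is not needed: $c_{i_0}\ge 1$ for some $i_0$ together with $|\mu_i|\ge 0$ for all $i$ already gives $|\mu'|\ge c_{i_0}|\mu_{i_0}|\ge|\mu_{i_0}|$. Also, your choice of $i_0$ (the minimizer of $|\mu_i|$) differs from the paper's (any index with $c_{i_0}\ge 1$), but either choice proves the proposition as stated, since the statement is existential in $i_0$.
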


\begin{proof}
	Because we have chosen the linearization as in \eqref{eq:tuttoIntero}, the weight map $\mu_L^H$ has its image contained in $\Z\Phi$.
	On the other hand, the $H'$-action on $X$ correspond, thanks to Construction \ref{con:downgrading}, to a map $\sum_i p_i \sigma_i: \Z \Phi \to \Z$. Hence, we can see the weight map of the $H'$-action on $X$ as
	\[
		\mu_L^{H'}=\left(\sum_i p_i \sigma_i\right) \circ \mu_L^H: \cY' \to \Z\Phi \to \Z.
	\]
	In particular, given a fixed-point $wP_j$, the map send it to
	\[
	wP_j \mapsto \omega_j -w(\omega_j)=\sum_{i} m_i \alpha_i \mapsto \sum_i p_i m_i.
	\]
	Because the action is non trivial, there exists at least one index $i_0$ such that $p_{i_0} \ge 1$.
\end{proof}


We finish the proof of Theorem \ref{thm:main} by looking at the value of the linearization over the fibers of two particular fixed points. 

\begin{remark}\label{rem:Pnega}
	In the decomposition of $\fg/\fp_j$ there are only negative roots of $\Phi$. Thanks to Remark \ref{rem:PZgrading} we have that $\fp_j=\fb \oplus \bigoplus_{\alpha \in \Phi^+ \setminus \Phi^+(j)} \fg_{-\alpha}$, hence
	\begin{equation}\label{eq:decog/p}
		\fg/\fp_j=\bigoplus_{\alpha-\alpha_j \in \Phi^+ \cup \{0\}} \fg_{-\alpha}.
	\end{equation}
\end{remark}
	
\begin{lemma}\label{lem:ePsource}
	Consider the fundamental $\C^*$-action of $H_i$ on $(X,L)$, then $eP_j \in Y_-$. Moreover, $eP_j=Y_-$ if and only if $i=j$.
\end{lemma}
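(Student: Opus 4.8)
The plan is to read off, directly from Proposition~\ref{prop:gridsGP}, the weight of the linearization and the compass at the point $eP_j$, and then to identify the fixed-point component through it using the sink/source dichotomy of Definition~\ref{def:sinkSource} together with a dimension count.

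Taking $w=e$ in Proposition~\ref{prop:gridsGP}, the character of the $H$-linearization on the fibre $L_{eP_j}$ is $k_j\omega_j$, and the $H$-weights of the action on $T_{eP_j}X$ are $\{k_j\alpha : \alpha\in\Phi^+,\ m_j(\alpha)>0\}$ (this is the compass, and it is the whole set of weights because $X^H$ is finite). Downgrading along $\alpha_i$ amounts to composing with $\omega_i^\vee$, so the weight on $L_{eP_j}$ becomes $\omega_i^\vee(k_j\omega_j)=k_j\,m_i(\omega_j)$ and the $H_i$-weights on $T_{eP_j}X$ become $k_j\,m_i(\alpha)$ for $\alpha\in\Phi^+$ with $m_j(\alpha)>0$. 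Since every positive root has non-negative coordinates in the base of simple roots, $m_i(\alpha)\ge0$ for all such $\alpha$, so $\Comp(eP_j,X,H_i)\subset\Z_{>0}$; by Definition~\ref{def:sinkSource} this is exactly the defining property of the source, i.e.\ the component $Y\subset X^{H_i}$ through $eP_j$ has $N^-(Y|X)=0$ and $Y=Y_+$. (Equivalently, $\omega_j-w(\omega_j)$ is a non-negative combination of simple roots for every $w\in W$, whence $m_i(w(\omega_j))\le m_i(\omega_j)$; thus $\mu_L(eP_j)$ is the maximal value of the weight map, which again places $eP_j$ in the source.)

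For the second assertion I would compute $\dim Y_+$. As $Y_+$ is the connected component of $X^{H_i}$ containing $eP_j$, its dimension is the number of vanishing $H_i$-weights on $T_{eP_j}X$, namely $\#\{\alpha\in\Phi^+ : m_j(\alpha)>0,\ m_i(\alpha)=0\}$. If $i=j$ this set is empty, so $Y_+=\{eP_j\}$; if $i\ne j$ then $\alpha_j$ lies in it (indeed $m_j(\alpha_j)=1>0$ and $m_i(\alpha_j)=0$), so $\dim Y_+\ge1$ and $Y_+\supsetneq\{eP_j\}$, giving $eP_j=Y_+$ if and only if $i=j$. All the steps are short; the one point demanding care is the orientation bookkeeping — checking that it is $Y_+$ (and not $Y_-$) that a positive compass singles out, and that the compass at $eP_j$ is genuinely built from \emph{positive} roots, so that the downgrading $\omega_i^\vee$ produces non-negative weights. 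With Proposition~\ref{prop:gridsGP} inserted with the correct signs, there is no substantial obstacle.
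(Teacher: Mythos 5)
Your argument is correct and is essentially the paper's own proof: both read off the weights at $eP_j$ from the positive roots $\alpha$ with $m_j(\alpha)>0$ (you via Proposition~\ref{prop:gridsGP}, the paper via $T_{eP_j}X=\fg/\fp_j$ and Remark~\ref{rem:Pnega}), note that downgrading along $\alpha_i$ yields only non-negative compass elements, and detect the zero weight coming from $\alpha_j$ when $i\ne j$; your explicit dimension count and the alternative maximality argument via $\omega_j-w(\omega_j)\ge 0$ are only made implicit in the paper. The single point to tidy is a sign convention: the compass is defined on the \emph{cotangent} space, so the elements $k_j\alpha$ are the negatives of the tangent weights and the vanishing normal summand at the source is $N^+(Y|X)$ rather than $N^-(Y|X)$ — this does not affect your conclusion, which rests on Definition~\ref{def:sinkSource} exactly as stated.
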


\begin{proof}
	Let us consider $T_{eP_j} X=\fg/\fp_j$. Thanks to Remark \ref{rem:Pnega}, we consider the induced $H_i$-action on $v \in \fg_{-\alpha}$ where $\alpha=\sum_{h=1}^n a_h \alpha_h \in \Phi^+$ and $a_j \ge 1$:
	\[
	s \cdot v= s^{\sigma_i(\alpha)}v=s^{a_i}v.
	\]
	So we obtain that $H_i$ acts on $T_{eP_j}X$ with all non-negative weights (all positive iff $i=j$ because $a_j \ge 1$). By definition of compass, it follows that all its elements are non-positive (all negative iff $i=j$).
\end{proof}

\begin{remark}
	With the choice of the linearization of \eqref{eq:tuttoIntero}, it follows that $\mu_L^i(eP_j)=0$.
\end{remark}

From the general theory (see \cite[Section 1.8]{humCoxeter}), there exists $w_\circ \in W$ such that $w_\circ \Phi^+=\Phi^-$. In particular, if $W$ is not of type $\DA_n$ with $n>1$, $\DD_{2n+1}$ and $\DE_6$, then $w_\circ=-\id$.
Otherwise
\begin{align}
	\tag{$\DA_n$} &w_\circ \alpha_i=-\alpha_{n+1-i}; \\
	\tag{$\DD_{2n+1}$} &w_\circ \alpha_{2n+1}=-\alpha_{2n}; \\
	\tag{$\DE_6$} &w_\circ\{\alpha_1,\alpha_2,\alpha_3,\alpha_4,\alpha_5,\alpha_6\}=\{-\alpha_6,-\alpha_2,-\alpha_5, -\alpha_4,-\alpha_3,-\alpha_1\}.
\end{align}

\begin{lemma}
	Consider the fundamental $\C^*$-action of $H_i$ on $(X,L)$, then $w_\circ P_j \in Y_+$. 
\end{lemma}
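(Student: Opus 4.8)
The plan is to imitate the proof of Lemma \ref{lem:ePsource}, replacing the coset $eP_j$ by $w_\circ P_j$ and using the defining property $w_\circ\Phi^+=\Phi^-$ of the longest element of $W$. First I would describe $T_{w_\circ P_j}X$ as an $H$-module: left translation by a representative of $w_\circ\in N(H)/H$ is an automorphism of $X=G/P_j$ carrying $eP_j$ to $w_\circ P_j$ and converting the $H$-action on the source into the $H$-action on the target through the action of $w_\circ$ on $M(H)$. Combined with Remark \ref{rem:Pnega}, which gives $T_{eP_j}X=\fg/\fp_j=\bigoplus_{\alpha}\fg_{-\alpha}$ with $\alpha$ ranging over those $\alpha\in\Phi^+$ with $m_j(\alpha)\ge 1$, this yields
\[
	T_{w_\circ P_j}X\;\cong\;\bigoplus_{\substack{\alpha\in\Phi^+\\ m_j(\alpha)\ge 1}}\fg_{-w_\circ\alpha},
\]
on whose summand $\fg_{-w_\circ\alpha}$ the torus $H_i$ acts with weight $\omega_i^\vee(-w_\circ\alpha)=-m_i(w_\circ\alpha)$; the same information can alternatively be read off from the compass formula of Proposition \ref{prop:gridsGP} at $w=w_\circ$.

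Now the key point: since $w_\circ\alpha\in\Phi^-$ for every $\alpha\in\Phi^+$, and a negative root has all of its coordinates in the basis of simple roots $\le 0$, we get $m_i(w_\circ\alpha)\le 0$, so every $H_i$-weight on $T_{w_\circ P_j}X$ is $\ge 0$. Passing to the cotangent space, the nonzero characters — that is, $\Comp(w_\circ P_j,X,\C^*)$ — are all $<0$. By the sink characterization of Definition \ref{def:sinkSource}, a fixed point whose compass lies in $\Z_{<0}$ has no positive-weight normal directions for the component $Y$ containing it, so $Y$ carries a dense (open) $t^{-1}$-attracting cell; as $X$ is irreducible this forces $Y=Y_-$, whence $w_\circ P_j\in Y_-$.

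The only genuinely delicate point is the sign bookkeeping: one must keep straight that ``all tangent weights $\ge 0$'' corresponds to ``compass $\subset\Z_{<0}$'', and hence to $w_\circ P_j$ lying in the sink rather than the source, and that the zero weights that appear when $i\ne j$ — precisely the tangent directions along $Y_-$ — are the ones discarded when forming the compass. Everything else is a verbatim transcription of the argument for Lemma \ref{lem:ePsource}, with $w_\circ$ in the role of the identity.
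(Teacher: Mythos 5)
Your proposal is correct and follows essentially the same route as the paper, which simply repeats the argument of Lemma \ref{lem:ePsource} with $w_\circ P_j$ in place of $eP_j$: the tangent weights at $w_\circ P_j$ are $\omega_i^\vee(-w_\circ\alpha)=-m_i(w_\circ\alpha)\ge 0$ because $w_\circ\Phi^+=\Phi^-$, so the compass is contained in $\Z_{<0}$ and Definition \ref{def:sinkSource} identifies the component as the sink. The sign bookkeeping and the use of Proposition \ref{prop:gridsGP} at $w=w_\circ$ are exactly as intended.
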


\begin{proof}
	Same proof as for Lemma \ref{lem:ePsource}, using $w_\circ \Phi^+=\Phi^-$.
\end{proof}

Combining the previous results of the section
, we obtain the following result.

\begin{theorem}
	Let $H_i$ acts on $(X,L)$ with weight map $\mu_L^i$, then 
	\[
	|\mu_L^i|=\sigma_i\left(\omega_j-w_\circ(\omega_j)\right).
	\]
\end{theorem}

\begin{proof}
	We recall that $\mu_L^i= \sigma_i \circ \mu_L^H$. 
	Thanks to the two previous lemmas, $\mu_L^i(Y_-)=\mu_L^i(eP_j)=0$ and $\mu_L^i(Y_+)=\mu_L^i(w_\circ P_j)$, hence by definition
	\[
	|\mu_L^i|=\mu_L^i(w_\circ P_j)-0=\sigma_i(\mu_L^H(w_\circ P_j))=\sigma_i(\omega_j-w_\circ (\omega_j)). \qedhere
	\] 
\end{proof}



At this point, to obtain the minimal bandwidth of Table \ref{tab:minBW}, it is enough to look at the tables in \cite[p. 250-275]{bourbaki6} for the decomposition of the fundamental weights in the basis of simple roots.
Moreover, we can generalize the previous result to the context of arbitrary RH variety, thanks to the linearity of $\sigma_i$.

\begin{corollary}
	Let $X=\cD(J)$ be a RH variety of arbitrary Picard number and let $L$ be an arbitrary line bundle corresponding to the dominant weight $\omega=\sum_{\alpha_j \in J} m_j\omega_j$. Suppose that the $\C^*$-action on $(X,L)$ correspond to $\sum_i p_i \sigma_i \in (\Z\Phi)^\vee$, then using linearization as in \eqref{eq:tuttoIntero}
	\[
	|\mu_L^i|=\sum_{\alpha_j \in J} m_j\sigma_i(\omega_j-w_\circ (\omega_j)).
	\]
\end{corollary}


\section{Projective description of classic cases}
\label{sec:examples}

We will use the numbering of nodes of Dynkin diagrams as in \cite{bourbaki6}.

\subsection{The case $\DA_n$: smooth drums}\label{ssec:An}

We start by $\DA_n(1)=\P^n$. 
We consider $H \subset \SL_{n+1}(\C)$, the maximal torus given by the diagonal matrices.
At this point we have to compute the polytope of fixed points $\Delta(\P^n):=\Delta(\P^n, \cO(1),H,\mu_{\cO(1)})$.
By \cite[Corollary 2.18]{BuWeWi} we know that
\[
\mu_{\cO(1)}(p')-\mu_{\cO(1)}(p)=\lambda \nu
\]
where $p,p'$ are fixed points (since $H$ is maximal, every fixed-point component is a fixed point by Proposition \ref{prop:gridsGP}) and $\nu \in \Comp(p,\P^n,H)$.
In particular, thanks to Proposition \ref{prop:gridsGP}, we have
\[
\Comp(eP_1,\P^n,H)=\left\{ \alpha \in \Phi^+: m_1(\alpha)>0\right\}=\left\{\alpha_1,\alpha_1+\alpha_2,\ldots,\alpha_1+\alpha_2+\ldots+\alpha_n\right\}.
\]
Using \cite{montagard2009regular}, we know that the polytope of fixed points is an $n$-simplex. 
Moreover, because $\mu_{\cO(1)}(eP_1)=0$, one can reconstruct all the other weights and they corresponds to the elements in the compass!

The $H$-actions on $\P^n$ is given by
\[
\left(t,[x_0:\ldots:x_n]\right) \mapsto \left[ x_0: t^{\alpha_1}x_1:\ldots: t^{\alpha_1+\ldots+\alpha_n}x_n \right],
\]
where $t^{\sum_i m_i \alpha_i}:=\prod_i t_i^{m_i \alpha_i}$ with $t_i \in \C^*$.
Finally, we choose the downgrading along $\alpha_1$:
\[
(t_1,[x_0:\ldots:x_n]) \mapsto \left[x_0: t_1^{1}x_1:\ldots:t_1^{1}x_n \right].
\]
This $\C^*$-action has two fixed-point components:
\begin{enumerate}
	\item the point $[e_0]:=[1:0:\ldots:0]$ with weight $0$,
	\item the hyperplane $\{x_0 = 0\}$ with weight $1$.
\end{enumerate}
Hence we recover that the $H_1$-action on $(\P^n, \cO(1))$ is $1$. 

We can repeat all this construction for the Grassmannians $\DA_n(k)$, obtaining again minimal bandwidth one. That is because these are examples of RH varieties which are \emph{smooth drums} in the sense of \cite[Section 4]{smallBandwidth}. In fact, \cite[Theorem 4.6]{smallBandwidth} characterizes the varieties $X$ which have a $\C^*$-action of bandwidth one. In the case $X$ is rational homogeneous, it has to be one of the smooth projective horospherical varieties of Picard number one described in \cite{pasquierPrinc}. In particular, the description of Grassmannians as horospherical varieties can be found in \cite[Proposition 1.9]{pasquierPrinc}.

\subsection{The case $\DD_n$}\label{ssec:Dn}

We start with an even dimensional quadric $\DD_n(1)$ given by $\cQ^{2n-2}=\left\{ x_0x_1+\ldots+x_{2n-2}x_{2n-1}=0\right\} \subset \P^{2n-1}$. 
Arguing as in the case $\DA_n$, the $\C^*$-action obtained by the downgrading along $\alpha_n$:
\[
\left(t_n,[x_0:\ldots:x_{2n-1}]\right) \mapsto \left[ x_0:t_n^{1}x_1:\ldots:x_{2n-2}:t_n^{1}x_{2n-1} \right].
\]
The fixed-point components are two $\P^{n-1}=\DA_{n-1}(1)$, as Theorem \ref{thm:Tevelev} and Proposition \ref{thm:trasvComp} predict. This is a bandwidth one $\C^*$-action 
(note that this is \cite[Proposition 1.8]{pasquierPrinc}).

If we compute the $\C^*$-action using the downgrading along $\alpha_1$, reduced to the action on $(\DD_n(1), \cO(1))$, the result is
\begin{equation}\label{eq:actionEvenQuadric}
	(t_1,[x_0:\ldots:x_{2n-1}]) \mapsto \left[ t_1^{-1}x_0: t_1x_1 : x_2 : \ldots: x_{2n-1} \right].
\end{equation}
We have three fixed-point components this time:
\begin{enumerate}
	\item the sink $Y_-=[e_0]$ with weight $-1$,
	\item a smaller quadric, $\DD_{n-1}(1)=\cQ^{2n-4}=\cQ^{2n-2} \cap \{x_0=x_1=0\}$, with weight $0$,
	\item the source $Y_+=[e_1]$ with weight $1$.
\end{enumerate}

In order to produce a bandwidth two action on $\DD_n(k)$ for $k <n-1$, which has to be minimal by Theorem \ref{thm:main}, we consider the action induced by \eqref{eq:actionEvenQuadric} on the isotropic Grassmannians.
Because the line spanned by $Y_+$ and $Y_+$ is not contained in the quadric, the invariant lines are:
\begin{enumerate}
\item any line spanned by $Y_+$ and a point of $\cQ^{2n-4}$, forming an even dimensional quadric $\DD_{n-1}(1)$ with weight $1$;
\item any line spanned by $Y_-$ and a point of $\cQ^{2n-4}$, which is $\DD_{n-1}(1)$ with weight $-1$;
\item any line contained in $\DD_{n-1}(1)$; the variety of these lines is $\DD_{n-1}(2)$ on which we have weight $0$.
\end{enumerate}
We have summarized the situation in Figure \ref{fig:Dn2}.
\begin{figure}[ht!]
\centering
\tikzset{every picture/.style={line width=0.75pt}} 

\begin{tikzpicture}[x=0.5pt,y=0.5pt,yscale=-1,xscale=1]

\draw   (190,110) .. controls (190,82.39) and (243.73,60) .. (310,60) .. controls (376.27,60) and (430,82.39) .. (430,110) .. controls (430,137.61) and (376.27,160) .. (310,160) .. controls (243.73,160) and (190,137.61) .. (190,110) -- cycle ;
\draw    (70,259) -- (550,259) ;
\draw    (110,249) -- (110,269) ;
\draw    (310,249) -- (310,269) ;
\draw    (510,249) -- (510,269) ;
\draw [line width=1.5]    (110,110) -- (270,140) ;
\draw [line width=1.5]    (340,80) -- (510,110) ;
\draw [line width=1.5]    (270,80) -- (360,120) ;
\draw [line width=1.5]    (250,90) -- (330,140) ;
\draw    (170,130) -- (111.2,208.4) ;
\draw [shift={(110,210)}, rotate = 306.87] [color={rgb, 255:red, 0; green, 0; blue, 0 }  ][line width=0.75]    (10.93,-3.29) .. controls (6.95,-1.4) and (3.31,-0.3) .. (0,0) .. controls (3.31,0.3) and (6.95,1.4) .. (10.93,3.29)   ;
\draw    (310,140) -- (310,208) ;
\draw [shift={(310,210)}, rotate = 270] [color={rgb, 255:red, 0; green, 0; blue, 0 }  ][line width=0.75]    (10.93,-3.29) .. controls (6.95,-1.4) and (3.31,-0.3) .. (0,0) .. controls (3.31,0.3) and (6.95,1.4) .. (10.93,3.29)   ;
\draw    (450,120) -- (508.89,208.34) ;
\draw [shift={(510,210)}, rotate = 236.31] [color={rgb, 255:red, 0; green, 0; blue, 0 }  ][line width=0.75]    (10.93,-3.29) .. controls (6.95,-1.4) and (3.31,-0.3) .. (0,0) .. controls (3.31,0.3) and (6.95,1.4) .. (10.93,3.29)   ;
\draw [line width=1.5]    (110,110) -- (230,100) ;
\draw [line width=1.5]    (380,110) -- (510,110) ;
\draw  [fill={rgb, 255:red, 0; green, 0; blue, 0 }  ,fill opacity=1 ] (105,110) .. controls (105,107.24) and (107.24,105) .. (110,105) .. controls (112.76,105) and (115,107.24) .. (115,110) .. controls (115,112.76) and (112.76,115) .. (110,115) .. controls (107.24,115) and (105,112.76) .. (105,110) -- cycle ;
\draw  [fill={rgb, 255:red, 0; green, 0; blue, 0 }  ,fill opacity=1 ] (505,110) .. controls (505,107.24) and (507.24,105) .. (510,105) .. controls (512.76,105) and (515,107.24) .. (515,110) .. controls (515,112.76) and (512.76,115) .. (510,115) .. controls (507.24,115) and (505,112.76) .. (505,110) -- cycle ;

\draw (96,272) node [anchor=north west][inner sep=0.75pt]    {$-1$};
\draw (305,272) node [anchor=north west][inner sep=0.75pt]    {$0$};
\draw (505,272) node [anchor=north west][inner sep=0.75pt]    {$1$};
\draw (100,73) node [anchor=north west][inner sep=0.75pt]    {$Y_-$};
\draw (500,73) node [anchor=north west][inner sep=0.75pt]    {$Y_+$};
\draw (295,23) node [anchor=north west][inner sep=0.75pt]    {$\cQ^{2n-4}$};
\draw (85,220) node [anchor=north west][inner sep=0.75pt]    {$\DD_{n-1}( 1)$};
\draw (285,220) node [anchor=north west][inner sep=0.75pt]    {$\DD_{n-1}( 2)$};
\draw (485,220) node [anchor=north west][inner sep=0.75pt]    {$\DD_{n-1}( 1)$};

\end{tikzpicture}
\caption{The induced action from the downgrading along $\alpha_1$ on $\DD_n(2)$.}
\label{fig:Dn2}
\end{figure}
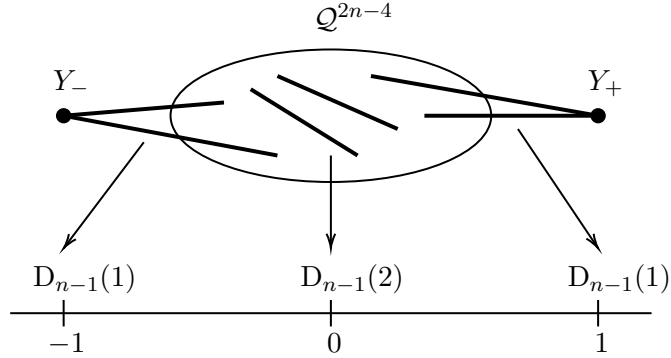

This argument can be generalized for $\mathrm{D}_n(k)$ with $k<n-1$.

Last, we look at the induced action on $\DD_n(n)$ (the case of $\DD_n(n-1)$ is analogous). It is a well-known fact that there are two families of $\P^{n-1}$'s contained in $\cQ^{2n-2}$, that we denote by $\P^{n-1}_a$'s and $\P^{n-1}_b$'s. We suppose that $\DD_n(n)$ parametrize the family of $\P^{n-1}_a$'s.
At this point, a fixed-point $\P^{n-1}_a$ in $\DD_n(n)$ can be only if two types:
\begin{enumerate}
	\item spanned by $Y_-$ and a $\P^{n-2}_a$ contained in $\cQ^{2n-4}$, denoted by $Y_{-1}$,
	\item spanned by $Y_+$ and a $\P^{n-2}_a$ contained in $\cQ^{2n-4}$, $Y_1$.
\end{enumerate}
We expect that this can be reduced to a bandwidth one action, since
\[
\omega_n=\frac{1}{2}\left( \alpha_1+2\alpha_2+\ldots+(n-2)\alpha_{n-2} +\frac{1}{2}(n-1) \alpha_{n-1}+\frac{1}{2}n\alpha_n\right).
\] 
In fact, this is exactly \cite[Proposition 1.10]{pasquierPrinc}.

\subsection{The case $\DB_n$}\label{ssec:caseBn}

This case is very similar to the previous one, so we describe only the differences. The odd quadric $\DB_n(1)=\cQ^{2n-1}=\left\{ x_0x_1+\ldots+x_{2n-2}x_{2n-1} + x_{2n}^2=0\right\}$ is the closed orbit into the projectivization of the representation given by
\[
\omega_1=\alpha_1+\ldots+\alpha_n.
\]
The main difference with the $\DD_n$-case is that, although we have the same polytope of fixed points (an $n$-orthoplex), the action given by the downgrading along $\alpha_n$ is given by
\begin{equation} \label{nonequalizedBn}
	\left(t_n,[x_0:\ldots:x_{2n}]\right) \mapsto \left[ t_n^{-1}x_0:t_nx_1:\ldots:t_n^{-1}x_{2n-2}:t_nx_{2n-1}: x_{2n} \right].
\end{equation}
Hence the $x_{2n}$-coordinate does not allow us to construct the bandwidth one action, contrary to the case of the even quadric $\cQ^{2n-2}=\cQ^{2n-1} \cap \{ x_{2n}=0\}$.

\begin{remark}\label{rem:BnNotEqualized}
	The $\C^*$-action \eqref{nonequalizedBn} on $\DB_n(1)$ is not equalized. Thanks to Lemma \ref{lem:AMvsFM2} we have that
	\[
		2=\delta(C) \deg f^*\cO(1)
	\]
	where $C$ is the closure of a general orbit. Suppose that the closure of such an orbit is a line. Then it is a line passing by the points  $p_1:=[x_0:0:x_2:0:\ldots:x_{2n-2}:0:0]$ and $p_2:=[0:x_1:0:x_3:\ldots:0:x_{2n-1}:0]$ which has the form
	\[
		\overline{\left\{[x_0:sx_1:\ldots:x_{2n-2}:sx_{2n-1}:0]: s \in \C^*\right\}}.
	\]
	But, for a general choice of $p_1$ and $p_2$, the points in the line do not satisfy the equation of $\cQ^{2n-1}$. Hence the general orbit is at least a conic, hence $\deg f^*\cO(1)\ge 2$ and $\delta(C) \le 1$. But $\delta(C)$ is an integer strictly bigger than zero, so $\delta(C)=1$ and $\deg f^*\cO(1)=2$.
	Now consider $q_1:=[1:0:\ldots:0]$ and $q_2:=[0:\ldots:0:1:0]$, the line passing by them is 
	\[
		\tilde{C}=\overline{\left\{[1:0:\ldots:0:s:0] : s \in \C^*\right\}} \subset \cQ^{2n-1}.
	\]
	So $\delta(\tilde{C})=2$ for this line and the action is not equalized by Remark \ref{rem:equalized}.
\end{remark}

On the other hand, the downgrading along $\alpha_1$ produces the same action of the $\DD_n$-case, so we skip the computations. One can also compute the minimal bandwidth of $\DB_n(n)$ using the isomorphism $\DB_n(n) \simeq \DD_{n+1}(n+1)$.

\subsection{The case $\DC_n$}\label{ssec:caseCn}

The isomorphism $\DC_n(1) \simeq \DA_{2n+1}(1)= \P^{2n-1}$ gives us a bandwidth one action. This is given by the downgrading along $\alpha_n$, in fact
\[
	\omega_1=\alpha_1+\ldots+\alpha_{n-1}+\frac{1}{2}\alpha_n.
\]
If one consider the $\C^*$-action given by $\alpha_1$, this is exactly the same action as in the case of the quadrics:
\[
	\left(t_1,[x_0:\ldots:x_{2n}]\right) \mapsto \left[ t_1^{-1}x_0:t_1x_1:x_2:\ldots:x_{2n-1} \right].
\]
Hence the induced action on $\DC_n(k)$ gives us a bandwidth two action, where the fixed-point components are:
\begin{enumerate}
	\item a $\DC_{n-1}(k-1)$ spanned by $Y_-$ and a $\P^{k-2} \subset \DC_{n-1}(1)=\P^{2n-3}$, with weight $-1$,
	\item the isotropic Grassmannian $\DC_{n-1}(k)$ with weight $0$,
	\item again a $\DC_{n-1}(k-1)$ spanned by $Y_+$ and a $\P^{k-2} \subset \DC_{n-1}(1)$ with weight $1$. 
\end{enumerate}
There are no other fixed-point components because, if $\Omega$ is the symplectic form defining $\SP(2n)$, then an isotropic line is spanned by $[a]$ and $[b]$ such that $\Omega(a,b)=0$ and $\Omega(Y_-,Y_+)\ne 0$. In particular, there is no isotropic linear space of any dimension passing through $Y_+$ and $Y_-$.

We conclude with a Remark on the $\C^*$-action given by $\alpha_1$ along $\DC_n(n)$.

\begin{remark}\label{rem:CnNotEqualized}
The $\C^*$-action on $\DC_n(n)$ given by the downgrading along $\alpha_1$ is not equalized.
Fix an isotropic $\P^{n-2} \in \DC_{n-1}(1)$. Then consider the $\P^{n}$ generated by the fixed-point $\P^{n-2}$, by $Y_+$ and $Y_-$. Inside this $\P^{n}$ there is a pencil of isotropic $\P^{n-1}$'s, i.e. a line between the fixed-point components of $\DC_n(n)$: consider the line between $Y_+$ and $Y_-$ given by
\[
	C=\overline{\{[\underbrace{1:0:\ldots:0}_n:\underbrace{s:0:\ldots:0}_{n}] \in \P^{2n-1} : s \in \C^*\}},
\]
then the $\P^{n-1}$'s are generated by the fixed-point $\P^{n-2}$'s and by a point in $C$.

Then, by Lemma \ref{lem:AMvsFM2} it must have $2=\delta(C) \deg f^*\cO(1)$ and $\deg f^*\cO(1)=1$ by the previous argument. So we have found the closure of a 1-dimensional orbit for which $\delta(C)=2$, hence the action is not equalized.

In the case $n=2$, this coincide with Remark \ref{rem:BnNotEqualized}.
\end{remark}


\section{Chow groups of the complex Cayley plane}\label{sec:Cayley}

The Cayley plane $X$ can be described in various ways: it is the fourth Severi variety of dimension $16$, hence the results of \cite{ManivelFreud} tell us that $X=\P^2(\O)$, where $\O=\mathbf{O} \otimes_\R \C$ ($\mathbf{O}$ is the algebra of octonions).
We prefer to describe $X$ as an RH variety, hence $X:=\DE_6(6)$.

\begin{remark}
	Thanks to Theorem \ref{thm:main}, the minimal bandwidth of $X$ is $2$ and a fundamental $\C^*$-action that attains this minimum is given by the downgrading along $\alpha_6$ (using the numbering of \cite{bourbaki6}). Moreover, using Theorem \ref{thm:Tevelev}, the fixed-point components $Y_0,Y_1,Y_2$ are of type $\DD_5$.
\end{remark}

\begin{proposition}
	Consider the fundamental $\C^*$-action of $H_6$ on $X \subset \P(V(\omega
	_6))$. Then
	\[
	X^{H_6}=\{*\} \sqcup \DD_5(5) \sqcup \DD_5(1).
	\]
	Moreover, the links between fixed-point components are given by
	\[
	\xymatrix{ & \DD_5(5) \ar[ld]_{\DD_5(5) \subset \P^{15}} \ar[rd]^{\P^0} & & \DD_5(1,5) \ar[ld]_{\P^4} \ar[rd]^{\DD_4(1) \subset \P^{7}} & \\
	\{*\} & & \DD_5(5) & & \DD_5(1)
	}.
	\]
\end{proposition}

\begin{proof}
	The fundamental representation $V(\omega_6)$ has dimension $27$, hence $X \subset \P^{26}$ and the action of the maximal torus $H$ on $X$ has at most $27$ fixed points.
	
	Using Lemma \ref{lem:ePsource}, we conclude that $Y_0$ is a point. Moreover, the compass for $H$ at $Y_2$ is given by
	\[
	\Comp(Y_2,X,H)=\{\alpha \in \Phi^+(\DE_6,H) : \alpha-\alpha_6 \ge 0\}
	\]
	thanks to Proposition \ref{prop:gridsGP}. 
	
	Let us note that there are exactly $16$ elements in this compass, one for every fixed point for $H$ which is sent to $Y_1$ by the downgrading along $\alpha_6$. Hence $Y_1 \subset \P^{15}$ is an RH variety of type $\DD_5$. The only variety with these properties is the $10$-dimensional spinor variety $\DD_5(5)$ (cf. \cite[Proposition 4.2]{ManivelCayleyPlane}). Last, there are $6$ elements in the compass for the action of $H$ at $Y_1$. Every point of $Y_1$ is linked to the fixed point $Y_2$ by a single direction in the compass, hence
	\[
	\nu_+(Y_1):=|\{ \nu_i(Y_1) >0\}|=1 \quad \text{and} \quad \nu_-(Y_1):=|\{\nu_i(Y_1) <0\}|=5.
	\]
	This information allow us to recover, respectively, the $\P^0$ and the $\P^4$ on the diagram.
	
	Finally, We know that $Y_1$ has to parametrize a family of $\P^4$ of $Y_2$, which is RH of type $\DD_5$. It follows that $Y_0=\DD_5(1)$. Last, there are $8$ (negative) elements in the compass at $Y_2$, and the incidence diagram tells us that every point in $Y_2$ is linked to a $6$-dimensional quadric $\DD_4(1) \subset \P^7$, as we expect.
\end{proof}

We summarize the information about positive and negative elements in the compass at the fixed-point components in the following table.
\begin{table}[ht!]
\centering
\begin{tabular}{c|c|c|c}
 & $Y_0$ & $Y_1$ & $Y_2$ \\
 \hline
 $\nu_+(Y_i)$ & $16$ & $5$ & $0$ \\
 \hline
 $\nu_-(Y_i)$ & $0$ & $1$ & $8$
\end{tabular}
\end{table}

Using this table we can rewrite the positive decomposition \eqref{eq:decoHomBB} as
\begin{equation}\label{eq:decoE6(1)}
	H_\bullet (X,\Z)=H_{\bullet}(\DD_5(1),\Z) \oplus H_{\bullet-10}(\DD_5(5),\Z) \oplus H_{\bullet-32}(\{*\},\Z).
\end{equation}
We are in the hypothesis of the Poincaré duality, hence we have 
\[
H_m(Y_i,\Z) \simeq H^{2\dim(Y_i)-m}(Y_i,\Z),
\]
and we obtain the additive decomposition of the cohomology ring as
\begin{equation}
H^\bullet(X,\Z)=H^{\bullet-16}(\DD_5(1),\Z) \oplus H^{\bullet-2}(\DD_5(5),\Z) \oplus H^{\bullet}(\{*\},\Z).	
\end{equation}
Finally we can decompose additively the Chow groups of $X$ as
\begin{equation}
	A^\bullet(X)=A^{\bullet-8}(\DD_5(1)) \oplus A^{\bullet-1}(\DD_5(5)) \oplus A^\bullet(\{*\}).
\end{equation}

\begin{remark}
	As pointed out in \cite[Section 3]{ManivelCayleyPlane}, one can study the Chow ring of $X$ using the corresponding Hasse diagram. In particular, Iliev and Manivel describe a Schubert subvariety of $X$ isomorphic to $\DD_5(5)$, finding a Hasse sub-diagram that corresponds to the one associated to $\DD_5(5)$. 
	
Our description in terms of $\C^*$-action on $X$ allows us to interpret the complement of the Hasse sub-diagram of $\DD_5(5)$ in $X$ as the union of two sub-diagrams, corresponding to the Chow groups of the sink (a point) and the source ($\DD_5(1)$) of the $\C^*$-action.

By the way, we cannot recover the ring structure of $A^\bullet(X)$, because our methods are based essentially on the Bia\l ynicki-Birula theorem. This requires a case-by-case analysis, as, for example, Iliev and Manivel did in \cite{ManivelCayleyPlane}. For sure, the knowledge of such fixed-point decomposition can help the analysis.

	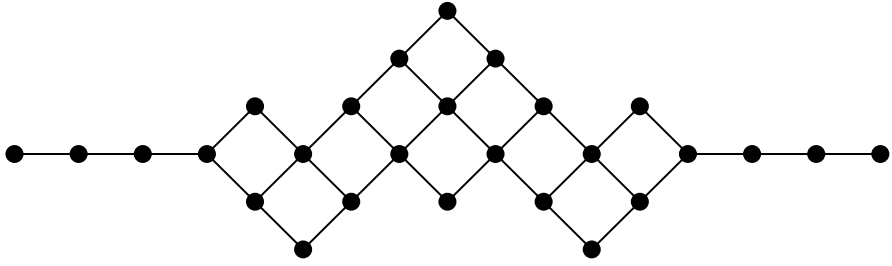
\begin{figure}
		\centering
		\tikzset{every picture/.style={line width=0.75pt}} 

\begin{tikzpicture}[x=0.6pt,y=0.6pt,yscale=-1,xscale=1]

\draw    (20,110) -- (60,110) ;
\draw    (60,110) -- (100,110) ;
\draw  [fill={rgb, 255:red, 0; green, 0; blue, 0 }  ,fill opacity=1 ] (15,110) .. controls (15,107.24) and (17.24,105) .. (20,105) .. controls (22.76,105) and (25,107.24) .. (25,110) .. controls (25,112.76) and (22.76,115) .. (20,115) .. controls (17.24,115) and (15,112.76) .. (15,110) -- cycle ;
\draw    (100,110) -- (140,110) ;
\draw    (140,110) -- (170,140) ;
\draw    (140,110) -- (170,80) ;
\draw    (170,80) -- (200,110) ;
\draw    (200,110) -- (230,140) ;
\draw    (170,140) -- (200,170) ;
\draw    (170,140) -- (200,110) ;
\draw    (200,170) -- (230,140) ;
\draw    (200,110) -- (230,80) ;
\draw    (230,80) -- (260,50) ;
\draw    (260,50) -- (290,20) ;
\draw    (230,140) -- (260,110) ;
\draw    (260,110) -- (290,80) ;
\draw    (290,80) -- (320,50) ;
\draw    (230,80) -- (260,110) ;
\draw    (260,50) -- (290,80) ;
\draw    (290,20) -- (320,50) ;
\draw    (260,110) -- (290,140) ;
\draw    (290,80) -- (320,110) ;
\draw    (320,50) -- (350,80) ;
\draw    (290,140) -- (320,110) ;
\draw    (320,110) -- (350,80) ;
\draw    (320,110) -- (350,140) ;
\draw    (350,80) -- (380,110) ;
\draw    (350,140) -- (380,170) ;
\draw    (380,110) -- (410,140) ;
\draw    (350,140) -- (380,110) ;
\draw    (380,170) -- (410,140) ;
\draw    (380,110) -- (410,80) ;
\draw    (410,140) -- (440,110) ;
\draw    (410,80) -- (440,110) ;
\draw    (440,110) -- (480,110) ;
\draw    (480,110) -- (520,110) ;
\draw    (520,110) -- (560,110) ;
\draw  [fill={rgb, 255:red, 0; green, 0; blue, 0 }  ,fill opacity=1 ] (55,110) .. controls (55,107.24) and (57.24,105) .. (60,105) .. controls (62.76,105) and (65,107.24) .. (65,110) .. controls (65,112.76) and (62.76,115) .. (60,115) .. controls (57.24,115) and (55,112.76) .. (55,110) -- cycle ;
\draw  [fill={rgb, 255:red, 0; green, 0; blue, 0 }  ,fill opacity=1 ] (95,110) .. controls (95,107.24) and (97.24,105) .. (100,105) .. controls (102.76,105) and (105,107.24) .. (105,110) .. controls (105,112.76) and (102.76,115) .. (100,115) .. controls (97.24,115) and (95,112.76) .. (95,110) -- cycle ;
\draw  [fill={rgb, 255:red, 0; green, 0; blue, 0 }  ,fill opacity=1 ] (135,110) .. controls (135,107.24) and (137.24,105) .. (140,105) .. controls (142.76,105) and (145,107.24) .. (145,110) .. controls (145,112.76) and (142.76,115) .. (140,115) .. controls (137.24,115) and (135,112.76) .. (135,110) -- cycle ;
\draw  [fill={rgb, 255:red, 0; green, 0; blue, 0 }  ,fill opacity=1 ] (165,80) .. controls (165,77.24) and (167.24,75) .. (170,75) .. controls (172.76,75) and (175,77.24) .. (175,80) .. controls (175,82.76) and (172.76,85) .. (170,85) .. controls (167.24,85) and (165,82.76) .. (165,80) -- cycle ;
\draw  [fill={rgb, 255:red, 0; green, 0; blue, 0 }  ,fill opacity=1 ] (165,140) .. controls (165,137.24) and (167.24,135) .. (170,135) .. controls (172.76,135) and (175,137.24) .. (175,140) .. controls (175,142.76) and (172.76,145) .. (170,145) .. controls (167.24,145) and (165,142.76) .. (165,140) -- cycle ;
\draw  [fill={rgb, 255:red, 0; green, 0; blue, 0 }  ,fill opacity=1 ] (195,170) .. controls (195,167.24) and (197.24,165) .. (200,165) .. controls (202.76,165) and (205,167.24) .. (205,170) .. controls (205,172.76) and (202.76,175) .. (200,175) .. controls (197.24,175) and (195,172.76) .. (195,170) -- cycle ;
\draw  [fill={rgb, 255:red, 0; green, 0; blue, 0 }  ,fill opacity=1 ] (195,110) .. controls (195,107.24) and (197.24,105) .. (200,105) .. controls (202.76,105) and (205,107.24) .. (205,110) .. controls (205,112.76) and (202.76,115) .. (200,115) .. controls (197.24,115) and (195,112.76) .. (195,110) -- cycle ;
\draw  [fill={rgb, 255:red, 0; green, 0; blue, 0 }  ,fill opacity=1 ] (225,140) .. controls (225,137.24) and (227.24,135) .. (230,135) .. controls (232.76,135) and (235,137.24) .. (235,140) .. controls (235,142.76) and (232.76,145) .. (230,145) .. controls (227.24,145) and (225,142.76) .. (225,140) -- cycle ;
\draw  [fill={rgb, 255:red, 0; green, 0; blue, 0 }  ,fill opacity=1 ] (225,80) .. controls (225,77.24) and (227.24,75) .. (230,75) .. controls (232.76,75) and (235,77.24) .. (235,80) .. controls (235,82.76) and (232.76,85) .. (230,85) .. controls (227.24,85) and (225,82.76) .. (225,80) -- cycle ;
\draw  [fill={rgb, 255:red, 0; green, 0; blue, 0 }  ,fill opacity=1 ] (255,110) .. controls (255,107.24) and (257.24,105) .. (260,105) .. controls (262.76,105) and (265,107.24) .. (265,110) .. controls (265,112.76) and (262.76,115) .. (260,115) .. controls (257.24,115) and (255,112.76) .. (255,110) -- cycle ;
\draw  [fill={rgb, 255:red, 0; green, 0; blue, 0 }  ,fill opacity=1 ] (255,50) .. controls (255,47.24) and (257.24,45) .. (260,45) .. controls (262.76,45) and (265,47.24) .. (265,50) .. controls (265,52.76) and (262.76,55) .. (260,55) .. controls (257.24,55) and (255,52.76) .. (255,50) -- cycle ;
\draw  [fill={rgb, 255:red, 0; green, 0; blue, 0 }  ,fill opacity=1 ] (285,20) .. controls (285,17.24) and (287.24,15) .. (290,15) .. controls (292.76,15) and (295,17.24) .. (295,20) .. controls (295,22.76) and (292.76,25) .. (290,25) .. controls (287.24,25) and (285,22.76) .. (285,20) -- cycle ;
\draw  [fill={rgb, 255:red, 0; green, 0; blue, 0 }  ,fill opacity=1 ] (285,80) .. controls (285,77.24) and (287.24,75) .. (290,75) .. controls (292.76,75) and (295,77.24) .. (295,80) .. controls (295,82.76) and (292.76,85) .. (290,85) .. controls (287.24,85) and (285,82.76) .. (285,80) -- cycle ;
\draw  [fill={rgb, 255:red, 0; green, 0; blue, 0 }  ,fill opacity=1 ] (285,140) .. controls (285,137.24) and (287.24,135) .. (290,135) .. controls (292.76,135) and (295,137.24) .. (295,140) .. controls (295,142.76) and (292.76,145) .. (290,145) .. controls (287.24,145) and (285,142.76) .. (285,140) -- cycle ;
\draw  [fill={rgb, 255:red, 0; green, 0; blue, 0 }  ,fill opacity=1 ] (315,110) .. controls (315,107.24) and (317.24,105) .. (320,105) .. controls (322.76,105) and (325,107.24) .. (325,110) .. controls (325,112.76) and (322.76,115) .. (320,115) .. controls (317.24,115) and (315,112.76) .. (315,110) -- cycle ;
\draw  [fill={rgb, 255:red, 0; green, 0; blue, 0 }  ,fill opacity=1 ] (315,50) .. controls (315,47.24) and (317.24,45) .. (320,45) .. controls (322.76,45) and (325,47.24) .. (325,50) .. controls (325,52.76) and (322.76,55) .. (320,55) .. controls (317.24,55) and (315,52.76) .. (315,50) -- cycle ;
\draw  [fill={rgb, 255:red, 0; green, 0; blue, 0 }  ,fill opacity=1 ] (345,80) .. controls (345,77.24) and (347.24,75) .. (350,75) .. controls (352.76,75) and (355,77.24) .. (355,80) .. controls (355,82.76) and (352.76,85) .. (350,85) .. controls (347.24,85) and (345,82.76) .. (345,80) -- cycle ;
\draw  [fill={rgb, 255:red, 0; green, 0; blue, 0 }  ,fill opacity=1 ] (375,110) .. controls (375,107.24) and (377.24,105) .. (380,105) .. controls (382.76,105) and (385,107.24) .. (385,110) .. controls (385,112.76) and (382.76,115) .. (380,115) .. controls (377.24,115) and (375,112.76) .. (375,110) -- cycle ;
\draw  [fill={rgb, 255:red, 0; green, 0; blue, 0 }  ,fill opacity=1 ] (345,140) .. controls (345,137.24) and (347.24,135) .. (350,135) .. controls (352.76,135) and (355,137.24) .. (355,140) .. controls (355,142.76) and (352.76,145) .. (350,145) .. controls (347.24,145) and (345,142.76) .. (345,140) -- cycle ;
\draw  [fill={rgb, 255:red, 0; green, 0; blue, 0 }  ,fill opacity=1 ] (375,170) .. controls (375,167.24) and (377.24,165) .. (380,165) .. controls (382.76,165) and (385,167.24) .. (385,170) .. controls (385,172.76) and (382.76,175) .. (380,175) .. controls (377.24,175) and (375,172.76) .. (375,170) -- cycle ;
\draw  [fill={rgb, 255:red, 0; green, 0; blue, 0 }  ,fill opacity=1 ] (405,140) .. controls (405,137.24) and (407.24,135) .. (410,135) .. controls (412.76,135) and (415,137.24) .. (415,140) .. controls (415,142.76) and (412.76,145) .. (410,145) .. controls (407.24,145) and (405,142.76) .. (405,140) -- cycle ;
\draw  [fill={rgb, 255:red, 0; green, 0; blue, 0 }  ,fill opacity=1 ] (405,80) .. controls (405,77.24) and (407.24,75) .. (410,75) .. controls (412.76,75) and (415,77.24) .. (415,80) .. controls (415,82.76) and (412.76,85) .. (410,85) .. controls (407.24,85) and (405,82.76) .. (405,80) -- cycle ;
\draw  [fill={rgb, 255:red, 0; green, 0; blue, 0 }  ,fill opacity=1 ] (435,110) .. controls (435,107.24) and (437.24,105) .. (440,105) .. controls (442.76,105) and (445,107.24) .. (445,110) .. controls (445,112.76) and (442.76,115) .. (440,115) .. controls (437.24,115) and (435,112.76) .. (435,110) -- cycle ;
\draw  [fill={rgb, 255:red, 0; green, 0; blue, 0 }  ,fill opacity=1 ] (475,110) .. controls (475,107.24) and (477.24,105) .. (480,105) .. controls (482.76,105) and (485,107.24) .. (485,110) .. controls (485,112.76) and (482.76,115) .. (480,115) .. controls (477.24,115) and (475,112.76) .. (475,110) -- cycle ;
\draw  [fill={rgb, 255:red, 0; green, 0; blue, 0 }  ,fill opacity=1 ] (515,110) .. controls (515,107.24) and (517.24,105) .. (520,105) .. controls (522.76,105) and (525,107.24) .. (525,110) .. controls (525,112.76) and (522.76,115) .. (520,115) .. controls (517.24,115) and (515,112.76) .. (515,110) -- cycle ;
\draw  [fill={rgb, 255:red, 0; green, 0; blue, 0 }  ,fill opacity=1 ] (555,110) .. controls (555,107.24) and (557.24,105) .. (560,105) .. controls (562.76,105) and (565,107.24) .. (565,110) .. controls (565,112.76) and (562.76,115) .. (560,115) .. controls (557.24,115) and (555,112.76) .. (555,110) -- cycle ;

\end{tikzpicture}
\caption{Hasse diagram of the Chow ring of $X$}
	\end{figure}
	
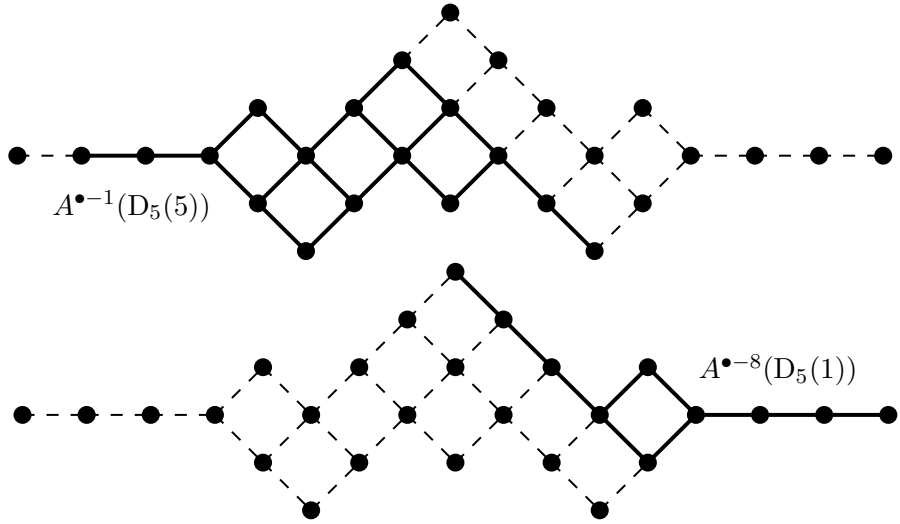
\begin{figure}

\tikzset{every picture/.style={line width=0.75pt}} 

\begin{tikzpicture}[x=0.6pt,y=0.6pt,yscale=-1,xscale=1]

\draw  [dash pattern={on 4.5pt off 4.5pt}]  (20,110) -- (60,110) ;
\draw [line width=1.50]    (60,110) -- (100,110) ;
\draw  [fill={rgb, 255:red, 0; green, 0; blue, 0 }  ,fill opacity=1 ] (15,110) .. controls (15,107.24) and (17.24,105) .. (20,105) .. controls (22.76,105) and (25,107.24) .. (25,110) .. controls (25,112.76) and (22.76,115) .. (20,115) .. controls (17.24,115) and (15,112.76) .. (15,110) -- cycle ;
\draw [line width=1.50]    (100,110) -- (140,110) ;
\draw [line width=1.50]    (140,110) -- (170,140) ;
\draw [line width=1.50]    (140,110) -- (170,80) ;
\draw [line width=1.50]    (170,80) -- (200,110) ;
\draw [line width=1.50]    (200,110) -- (230,140) ;
\draw [line width=1.50]    (170,140) -- (200,170) ;
\draw [line width=1.50]    (170,140) -- (200,110) ;
\draw [line width=1.50]    (200,170) -- (230,140) ;
\draw [line width=1.50]    (200,110) -- (230,80) ;
\draw [line width=1.50]    (230,80) -- (260,50) ;
\draw  [dash pattern={on 4.5pt off 4.5pt}]  (260,50) -- (290,20) ;
\draw [line width=1.50]    (230,140) -- (260,110) ;
\draw [line width=1.50]    (260,110) -- (290,80) ;
\draw  [dash pattern={on 4.5pt off 4.5pt}]  (290,80) -- (320,50) ;
\draw [line width=1.50]    (230,80) -- (260,110) ;
\draw [line width=1.50]    (260,50) -- (290,80) ;
\draw  [dash pattern={on 4.5pt off 4.5pt}]  (290,20) -- (320,50) ;
\draw [line width=1.50]    (260,110) -- (290,140) ;
\draw [line width=1.50]    (290,80) -- (320,110) ;
\draw  [dash pattern={on 4.5pt off 4.5pt}]  (320,50) -- (350,80) ;
\draw [line width=1.50]    (290,140) -- (320,110) ;
\draw  [dash pattern={on 4.5pt off 4.5pt}]  (320,110) -- (350,80) ;
\draw [line width=1.50]    (320,110) -- (350,140) ;
\draw  [dash pattern={on 4.5pt off 4.5pt}]  (350,80) -- (380,110) ;
\draw [line width=1.50]    (350,140) -- (380,170) ;
\draw  [dash pattern={on 4.5pt off 4.5pt}]  (380,110) -- (410,140) ;
\draw  [dash pattern={on 4.5pt off 4.5pt}]  (350,140) -- (380,110) ;
\draw  [dash pattern={on 4.5pt off 4.5pt}]  (380,170) -- (410,140) ;
\draw  [dash pattern={on 4.5pt off 4.5pt}]  (380,110) -- (410,80) ;
\draw  [dash pattern={on 4.5pt off 4.5pt}]  (410,140) -- (440,110) ;
\draw  [dash pattern={on 4.5pt off 4.5pt}]  (410,80) -- (440,110) ;
\draw  [dash pattern={on 4.5pt off 4.5pt}]  (440,110) -- (480,110) ;
\draw  [dash pattern={on 4.5pt off 4.5pt}]  (480,110) -- (520,110) ;
\draw  [dash pattern={on 4.5pt off 4.5pt}]  (520,110) -- (560,110) ;
\draw  [fill={rgb, 255:red, 0; green, 0; blue, 0 }  ,fill opacity=1 ] (55,110) .. controls (55,107.24) and (57.24,105) .. (60,105) .. controls (62.76,105) and (65,107.24) .. (65,110) .. controls (65,112.76) and (62.76,115) .. (60,115) .. controls (57.24,115) and (55,112.76) .. (55,110) -- cycle ;
\draw  [fill={rgb, 255:red, 0; green, 0; blue, 0 }  ,fill opacity=1 ]  (95,110) .. controls (95,107.24) and (97.24,105) .. (100,105) .. controls (102.76,105) and (105,107.24) .. (105,110) .. controls (105,112.76) and (102.76,115) .. (100,115) .. controls (97.24,115) and (95,112.76) .. (95,110) -- cycle ;
\draw  [fill={rgb, 255:red, 0; green, 0; blue, 0 }  ,fill opacity=1 ]  (135,110) .. controls (135,107.24) and (137.24,105) .. (140,105) .. controls (142.76,105) and (145,107.24) .. (145,110) .. controls (145,112.76) and (142.76,115) .. (140,115) .. controls (137.24,115) and (135,112.76) .. (135,110) -- cycle ;
\draw  [fill={rgb, 255:red, 0; green, 0; blue, 0 }  ,fill opacity=1 ]  (165,80) .. controls (165,77.24) and (167.24,75) .. (170,75) .. controls (172.76,75) and (175,77.24) .. (175,80) .. controls (175,82.76) and (172.76,85) .. (170,85) .. controls (167.24,85) and (165,82.76) .. (165,80) -- cycle ;
\draw  [fill={rgb, 255:red, 0; green, 0; blue, 0 }  ,fill opacity=1 ]  (165,140) .. controls (165,137.24) and (167.24,135) .. (170,135) .. controls (172.76,135) and (175,137.24) .. (175,140) .. controls (175,142.76) and (172.76,145) .. (170,145) .. controls (167.24,145) and (165,142.76) .. (165,140) -- cycle ;
\draw  [fill={rgb, 255:red, 0; green, 0; blue, 0 }  ,fill opacity=1 ] (195,170) .. controls (195,167.24) and (197.24,165) .. (200,165) .. controls (202.76,165) and (205,167.24) .. (205,170) .. controls (205,172.76) and (202.76,175) .. (200,175) .. controls (197.24,175) and (195,172.76) .. (195,170) -- cycle ;
\draw  [fill={rgb, 255:red, 0; green, 0; blue, 0 }  ,fill opacity=1 ]  (195,110) .. controls (195,107.24) and (197.24,105) .. (200,105) .. controls (202.76,105) and (205,107.24) .. (205,110) .. controls (205,112.76) and (202.76,115) .. (200,115) .. controls (197.24,115) and (195,112.76) .. (195,110) -- cycle ;
\draw  [fill={rgb, 255:red, 0; green, 0; blue, 0 }  ,fill opacity=1 ]  (225,140) .. controls (225,137.24) and (227.24,135) .. (230,135) .. controls (232.76,135) and (235,137.24) .. (235,140) .. controls (235,142.76) and (232.76,145) .. (230,145) .. controls (227.24,145) and (225,142.76) .. (225,140) -- cycle ;
\draw  [fill={rgb, 255:red, 0; green, 0; blue, 0 }  ,fill opacity=1 ]  (225,80) .. controls (225,77.24) and (227.24,75) .. (230,75) .. controls (232.76,75) and (235,77.24) .. (235,80) .. controls (235,82.76) and (232.76,85) .. (230,85) .. controls (227.24,85) and (225,82.76) .. (225,80) -- cycle ;
\draw  [fill={rgb, 255:red, 0; green, 0; blue, 0 }  ,fill opacity=1 ]  (255,110) .. controls (255,107.24) and (257.24,105) .. (260,105) .. controls (262.76,105) and (265,107.24) .. (265,110) .. controls (265,112.76) and (262.76,115) .. (260,115) .. controls (257.24,115) and (255,112.76) .. (255,110) -- cycle ;
\draw  [fill={rgb, 255:red, 0; green, 0; blue, 0 }  ,fill opacity=1 ]  (255,50) .. controls (255,47.24) and (257.24,45) .. (260,45) .. controls (262.76,45) and (265,47.24) .. (265,50) .. controls (265,52.76) and (262.76,55) .. (260,55) .. controls (257.24,55) and (255,52.76) .. (255,50) -- cycle ;
\draw  [fill={rgb, 255:red, 0; green, 0; blue, 0 }  ,fill opacity=1 ] (285,20) .. controls (285,17.24) and (287.24,15) .. (290,15) .. controls (292.76,15) and (295,17.24) .. (295,20) .. controls (295,22.76) and (292.76,25) .. (290,25) .. controls (287.24,25) and (285,22.76) .. (285,20) -- cycle ;
\draw  [fill={rgb, 255:red, 0; green, 0; blue, 0 }  ,fill opacity=1 ] (285,80) .. controls (285,77.24) and (287.24,75) .. (290,75) .. controls (292.76,75) and (295,77.24) .. (295,80) .. controls (295,82.76) and (292.76,85) .. (290,85) .. controls (287.24,85) and (285,82.76) .. (285,80) -- cycle ;
\draw  [fill={rgb, 255:red, 0; green, 0; blue, 0 }  ,fill opacity=1 ] (285,140) .. controls (285,137.24) and (287.24,135) .. (290,135) .. controls (292.76,135) and (295,137.24) .. (295,140) .. controls (295,142.76) and (292.76,145) .. (290,145) .. controls (287.24,145) and (285,142.76) .. (285,140) -- cycle ;
\draw  [fill={rgb, 255:red, 0; green, 0; blue, 0 }  ,fill opacity=1 ]  (315,110) .. controls (315,107.24) and (317.24,105) .. (320,105) .. controls (322.76,105) and (325,107.24) .. (325,110) .. controls (325,112.76) and (322.76,115) .. (320,115) .. controls (317.24,115) and (315,112.76) .. (315,110) -- cycle ;
\draw  [fill={rgb, 255:red, 0; green, 0; blue, 0 }  ,fill opacity=1 ] (315,50) .. controls (315,47.24) and (317.24,45) .. (320,45) .. controls (322.76,45) and (325,47.24) .. (325,50) .. controls (325,52.76) and (322.76,55) .. (320,55) .. controls (317.24,55) and (315,52.76) .. (315,50) -- cycle ;
\draw  [fill={rgb, 255:red, 0; green, 0; blue, 0 }  ,fill opacity=1 ] (345,80) .. controls (345,77.24) and (347.24,75) .. (350,75) .. controls (352.76,75) and (355,77.24) .. (355,80) .. controls (355,82.76) and (352.76,85) .. (350,85) .. controls (347.24,85) and (345,82.76) .. (345,80) -- cycle ;
\draw  [fill={rgb, 255:red, 0; green, 0; blue, 0 }  ,fill opacity=1 ] (375,110) .. controls (375,107.24) and (377.24,105) .. (380,105) .. controls (382.76,105) and (385,107.24) .. (385,110) .. controls (385,112.76) and (382.76,115) .. (380,115) .. controls (377.24,115) and (375,112.76) .. (375,110) -- cycle ;
\draw  [fill={rgb, 255:red, 0; green, 0; blue, 0 }  ,fill opacity=1 ] (345,140) .. controls (345,137.24) and (347.24,135) .. (350,135) .. controls (352.76,135) and (355,137.24) .. (355,140) .. controls (355,142.76) and (352.76,145) .. (350,145) .. controls (347.24,145) and (345,142.76) .. (345,140) -- cycle ;
\draw  [fill={rgb, 255:red, 0; green, 0; blue, 0 }  ,fill opacity=1 ] (375,170) .. controls (375,167.24) and (377.24,165) .. (380,165) .. controls (382.76,165) and (385,167.24) .. (385,170) .. controls (385,172.76) and (382.76,175) .. (380,175) .. controls (377.24,175) and (375,172.76) .. (375,170) -- cycle ;
\draw  [fill={rgb, 255:red, 0; green, 0; blue, 0 }  ,fill opacity=1 ] (405,140) .. controls (405,137.24) and (407.24,135) .. (410,135) .. controls (412.76,135) and (415,137.24) .. (415,140) .. controls (415,142.76) and (412.76,145) .. (410,145) .. controls (407.24,145) and (405,142.76) .. (405,140) -- cycle ;
\draw  [fill={rgb, 255:red, 0; green, 0; blue, 0 }  ,fill opacity=1 ] (405,80) .. controls (405,77.24) and (407.24,75) .. (410,75) .. controls (412.76,75) and (415,77.24) .. (415,80) .. controls (415,82.76) and (412.76,85) .. (410,85) .. controls (407.24,85) and (405,82.76) .. (405,80) -- cycle ;
\draw  [fill={rgb, 255:red, 0; green, 0; blue, 0 }  ,fill opacity=1 ] (435,110) .. controls (435,107.24) and (437.24,105) .. (440,105) .. controls (442.76,105) and (445,107.24) .. (445,110) .. controls (445,112.76) and (442.76,115) .. (440,115) .. controls (437.24,115) and (435,112.76) .. (435,110) -- cycle ;
\draw  [fill={rgb, 255:red, 0; green, 0; blue, 0 }  ,fill opacity=1 ] (475,110) .. controls (475,107.24) and (477.24,105) .. (480,105) .. controls (482.76,105) and (485,107.24) .. (485,110) .. controls (485,112.76) and (482.76,115) .. (480,115) .. controls (477.24,115) and (475,112.76) .. (475,110) -- cycle ;
\draw  [fill={rgb, 255:red, 0; green, 0; blue, 0 }  ,fill opacity=1 ] (515,110) .. controls (515,107.24) and (517.24,105) .. (520,105) .. controls (522.76,105) and (525,107.24) .. (525,110) .. controls (525,112.76) and (522.76,115) .. (520,115) .. controls (517.24,115) and (515,112.76) .. (515,110) -- cycle ;
\draw  [fill={rgb, 255:red, 0; green, 0; blue, 0 }  ,fill opacity=1 ] (555,110) .. controls (555,107.24) and (557.24,105) .. (560,105) .. controls (562.76,105) and (565,107.24) .. (565,110) .. controls (565,112.76) and (562.76,115) .. (560,115) .. controls (557.24,115) and (555,112.76) .. (555,110) -- cycle ;

\draw (40,130) node [anchor=north west][inner sep=0.75pt]    {$A^{\bullet-1}(\DD_{5}(5))$};

\end{tikzpicture}
\quad

\tikzset{every picture/.style={line width=0.75pt}} 

\begin{tikzpicture}[x=0.6pt,y=0.6pt,yscale=-1,xscale=1]

\draw [line width=0.75]  [dash pattern={on 4.5pt off 4.5pt}]  (20,110) -- (60,110) ;
\draw [line width=0.75]  [dash pattern={on 4.5pt off 4.5pt}]  (60,110) -- (100,110) ;
\draw  [fill={rgb, 255:red, 0; green, 0; blue, 0 }  ,fill opacity=1 ] (15,110) .. controls (15,107.24) and (17.24,105) .. (20,105) .. controls (22.76,105) and (25,107.24) .. (25,110) .. controls (25,112.76) and (22.76,115) .. (20,115) .. controls (17.24,115) and (15,112.76) .. (15,110) -- cycle ;
\draw [line width=0.75]  [dash pattern={on 4.5pt off 4.5pt}]  (100,110) -- (140,110) ;
\draw [line width=0.75]  [dash pattern={on 4.5pt off 4.5pt}]  (140,110) -- (170,140) ;
\draw [line width=0.75]  [dash pattern={on 4.5pt off 4.5pt}]  (140,110) -- (170,80) ;
\draw [line width=0.75]  [dash pattern={on 4.5pt off 4.5pt}]  (170,80) -- (200,110) ;
\draw [line width=0.75]  [dash pattern={on 4.5pt off 4.5pt}]  (200,110) -- (230,140) ;
\draw [line width=0.75]  [dash pattern={on 4.5pt off 4.5pt}]  (170,140) -- (200,170) ;
\draw [line width=0.75]  [dash pattern={on 4.5pt off 4.5pt}]  (170,140) -- (200,110) ;
\draw [line width=0.75]  [dash pattern={on 4.5pt off 4.5pt}]  (200,170) -- (230,140) ;
\draw [line width=0.75]  [dash pattern={on 4.5pt off 4.5pt}]  (200,110) -- (230,80) ;
\draw [line width=0.75]  [dash pattern={on 4.5pt off 4.5pt}]  (230,80) -- (260,50) ;
\draw [line width=0.75]  [dash pattern={on 4.5pt off 4.5pt}]  (260,50) -- (290,20) ;
\draw [line width=0.75]  [dash pattern={on 4.5pt off 4.5pt}]  (230,140) -- (260,110) ;
\draw [line width=0.75]  [dash pattern={on 4.5pt off 4.5pt}]  (260,110) -- (290,80) ;
\draw [line width=0.75]  [dash pattern={on 4.5pt off 4.5pt}]  (290,80) -- (320,50) ;
\draw [line width=0.75]  [dash pattern={on 4.5pt off 4.5pt}]  (230,80) -- (260,110) ;
\draw [line width=0.75]  [dash pattern={on 4.5pt off 4.5pt}]  (260,50) -- (290,80) ;
\draw [line width=1.50]    (290,20) -- (320,50) ;
\draw [line width=0.75]  [dash pattern={on 4.5pt off 4.5pt}]  (260,110) -- (290,140) ;
\draw [line width=0.75]  [dash pattern={on 4.5pt off 4.5pt}]  (290,80) -- (320,110) ;
\draw [line width=1.50]    (320,50) -- (350,80) ;
\draw [line width=0.75]  [dash pattern={on 4.5pt off 4.5pt}]  (290,140) -- (320,110) ;
\draw [line width=0.75]  [dash pattern={on 4.5pt off 4.5pt}]  (320,110) -- (350,80) ;
\draw [line width=0.75]  [dash pattern={on 4.5pt off 4.5pt}]  (320,110) -- (350,140) ;
\draw [line width=1.50]    (350,80) -- (380,110) ;
\draw [line width=0.75]  [dash pattern={on 4.5pt off 4.5pt}]  (350,140) -- (380,170) ;
\draw [line width=1.50]    (380,110) -- (410,140) ;
\draw [line width=0.75]  [dash pattern={on 4.5pt off 4.5pt}]  (350,140) -- (380,110) ;
\draw  [dash pattern={on 4.5pt off 4.5pt}]  (380,170) -- (410,140) ;
\draw [line width=1.50]    (380,110) -- (410,80) ;
\draw [line width=1.50]    (410,140) -- (440,110) ;
\draw [line width=1.50]    (410,80) -- (440,110) ;
\draw [line width=1.50]    (440,110) -- (480,110) ;
\draw [line width=1.50]    (480,110) -- (520,110) ;
\draw [line width=1.50]    (520,110) -- (560,110) ;
\draw  [fill={rgb, 255:red, 0; green, 0; blue, 0 }  ,fill opacity=1 ] (55,110) .. controls (55,107.24) and (57.24,105) .. (60,105) .. controls (62.76,105) and (65,107.24) .. (65,110) .. controls (65,112.76) and (62.76,115) .. (60,115) .. controls (57.24,115) and (55,112.76) .. (55,110) -- cycle ;
\draw  [fill={rgb, 255:red, 0; green, 0; blue, 0 }  ,fill opacity=1 ]  (95,110) .. controls (95,107.24) and (97.24,105) .. (100,105) .. controls (102.76,105) and (105,107.24) .. (105,110) .. controls (105,112.76) and (102.76,115) .. (100,115) .. controls (97.24,115) and (95,112.76) .. (95,110) -- cycle ;
\draw  [fill={rgb, 255:red, 0; green, 0; blue, 0 }  ,fill opacity=1 ]  (135,110) .. controls (135,107.24) and (137.24,105) .. (140,105) .. controls (142.76,105) and (145,107.24) .. (145,110) .. controls (145,112.76) and (142.76,115) .. (140,115) .. controls (137.24,115) and (135,112.76) .. (135,110) -- cycle ;
\draw  [fill={rgb, 255:red, 0; green, 0; blue, 0 }  ,fill opacity=1 ]  (165,80) .. controls (165,77.24) and (167.24,75) .. (170,75) .. controls (172.76,75) and (175,77.24) .. (175,80) .. controls (175,82.76) and (172.76,85) .. (170,85) .. controls (167.24,85) and (165,82.76) .. (165,80) -- cycle ;
\draw  [fill={rgb, 255:red, 0; green, 0; blue, 0 }  ,fill opacity=1 ]  (165,140) .. controls (165,137.24) and (167.24,135) .. (170,135) .. controls (172.76,135) and (175,137.24) .. (175,140) .. controls (175,142.76) and (172.76,145) .. (170,145) .. controls (167.24,145) and (165,142.76) .. (165,140) -- cycle ;
\draw  [fill={rgb, 255:red, 0; green, 0; blue, 0 }  ,fill opacity=1 ]  (195,170) .. controls (195,167.24) and (197.24,165) .. (200,165) .. controls (202.76,165) and (205,167.24) .. (205,170) .. controls (205,172.76) and (202.76,175) .. (200,175) .. controls (197.24,175) and (195,172.76) .. (195,170) -- cycle ;
\draw  [fill={rgb, 255:red, 0; green, 0; blue, 0 }  ,fill opacity=1 ] (195,110) .. controls (195,107.24) and (197.24,105) .. (200,105) .. controls (202.76,105) and (205,107.24) .. (205,110) .. controls (205,112.76) and (202.76,115) .. (200,115) .. controls (197.24,115) and (195,112.76) .. (195,110) -- cycle ;
\draw  [fill={rgb, 255:red, 0; green, 0; blue, 0 }  ,fill opacity=1 ]  (225,140) .. controls (225,137.24) and (227.24,135) .. (230,135) .. controls (232.76,135) and (235,137.24) .. (235,140) .. controls (235,142.76) and (232.76,145) .. (230,145) .. controls (227.24,145) and (225,142.76) .. (225,140) -- cycle ;
\draw  [fill={rgb, 255:red, 0; green, 0; blue, 0 }  ,fill opacity=1 ]  (225,80) .. controls (225,77.24) and (227.24,75) .. (230,75) .. controls (232.76,75) and (235,77.24) .. (235,80) .. controls (235,82.76) and (232.76,85) .. (230,85) .. controls (227.24,85) and (225,82.76) .. (225,80) -- cycle ;
\draw  [fill={rgb, 255:red, 0; green, 0; blue, 0 }  ,fill opacity=1 ]  (255,110) .. controls (255,107.24) and (257.24,105) .. (260,105) .. controls (262.76,105) and (265,107.24) .. (265,110) .. controls (265,112.76) and (262.76,115) .. (260,115) .. controls (257.24,115) and (255,112.76) .. (255,110) -- cycle ;
\draw  [fill={rgb, 255:red, 0; green, 0; blue, 0 }  ,fill opacity=1 ] (255,50) .. controls (255,47.24) and (257.24,45) .. (260,45) .. controls (262.76,45) and (265,47.24) .. (265,50) .. controls (265,52.76) and (262.76,55) .. (260,55) .. controls (257.24,55) and (255,52.76) .. (255,50) -- cycle ;
\draw  [fill={rgb, 255:red, 0; green, 0; blue, 0 }  ,fill opacity=1 ] (285,20) .. controls (285,17.24) and (287.24,15) .. (290,15) .. controls (292.76,15) and (295,17.24) .. (295,20) .. controls (295,22.76) and (292.76,25) .. (290,25) .. controls (287.24,25) and (285,22.76) .. (285,20) -- cycle ;
\draw  [fill={rgb, 255:red, 0; green, 0; blue, 0 }  ,fill opacity=1 ]  (285,80) .. controls (285,77.24) and (287.24,75) .. (290,75) .. controls (292.76,75) and (295,77.24) .. (295,80) .. controls (295,82.76) and (292.76,85) .. (290,85) .. controls (287.24,85) and (285,82.76) .. (285,80) -- cycle ;
\draw  [fill={rgb, 255:red, 0; green, 0; blue, 0 }  ,fill opacity=1 ]  (285,140) .. controls (285,137.24) and (287.24,135) .. (290,135) .. controls (292.76,135) and (295,137.24) .. (295,140) .. controls (295,142.76) and (292.76,145) .. (290,145) .. controls (287.24,145) and (285,142.76) .. (285,140) -- cycle ;
\draw  [fill={rgb, 255:red, 0; green, 0; blue, 0 }  ,fill opacity=1 ]  (315,110) .. controls (315,107.24) and (317.24,105) .. (320,105) .. controls (322.76,105) and (325,107.24) .. (325,110) .. controls (325,112.76) and (322.76,115) .. (320,115) .. controls (317.24,115) and (315,112.76) .. (315,110) -- cycle ;
\draw  [fill={rgb, 255:red, 0; green, 0; blue, 0 }  ,fill opacity=1 ]  (315,50) .. controls (315,47.24) and (317.24,45) .. (320,45) .. controls (322.76,45) and (325,47.24) .. (325,50) .. controls (325,52.76) and (322.76,55) .. (320,55) .. controls (317.24,55) and (315,52.76) .. (315,50) -- cycle ;
\draw  [fill={rgb, 255:red, 0; green, 0; blue, 0 }  ,fill opacity=1 ]  (345,80) .. controls (345,77.24) and (347.24,75) .. (350,75) .. controls (352.76,75) and (355,77.24) .. (355,80) .. controls (355,82.76) and (352.76,85) .. (350,85) .. controls (347.24,85) and (345,82.76) .. (345,80) -- cycle ;
\draw  [fill={rgb, 255:red, 0; green, 0; blue, 0 }  ,fill opacity=1 ] (375,110) .. controls (375,107.24) and (377.24,105) .. (380,105) .. controls (382.76,105) and (385,107.24) .. (385,110) .. controls (385,112.76) and (382.76,115) .. (380,115) .. controls (377.24,115) and (375,112.76) .. (375,110) -- cycle ;
\draw  [fill={rgb, 255:red, 0; green, 0; blue, 0 }  ,fill opacity=1 ]  (345,140) .. controls (345,137.24) and (347.24,135) .. (350,135) .. controls (352.76,135) and (355,137.24) .. (355,140) .. controls (355,142.76) and (352.76,145) .. (350,145) .. controls (347.24,145) and (345,142.76) .. (345,140) -- cycle ;
\draw  [fill={rgb, 255:red, 0; green, 0; blue, 0 }  ,fill opacity=1 ] (375,170) .. controls (375,167.24) and (377.24,165) .. (380,165) .. controls (382.76,165) and (385,167.24) .. (385,170) .. controls (385,172.76) and (382.76,175) .. (380,175) .. controls (377.24,175) and (375,172.76) .. (375,170) -- cycle ;
\draw  [fill={rgb, 255:red, 0; green, 0; blue, 0 }  ,fill opacity=1 ] (405,140) .. controls (405,137.24) and (407.24,135) .. (410,135) .. controls (412.76,135) and (415,137.24) .. (415,140) .. controls (415,142.76) and (412.76,145) .. (410,145) .. controls (407.24,145) and (405,142.76) .. (405,140) -- cycle ;
\draw  [fill={rgb, 255:red, 0; green, 0; blue, 0 }  ,fill opacity=1 ] (405,80) .. controls (405,77.24) and (407.24,75) .. (410,75) .. controls (412.76,75) and (415,77.24) .. (415,80) .. controls (415,82.76) and (412.76,85) .. (410,85) .. controls (407.24,85) and (405,82.76) .. (405,80) -- cycle ;
\draw  [fill={rgb, 255:red, 0; green, 0; blue, 0 }  ,fill opacity=1 ] (435,110) .. controls (435,107.24) and (437.24,105) .. (440,105) .. controls (442.76,105) and (445,107.24) .. (445,110) .. controls (445,112.76) and (442.76,115) .. (440,115) .. controls (437.24,115) and (435,112.76) .. (435,110) -- cycle ;
\draw  [fill={rgb, 255:red, 0; green, 0; blue, 0 }  ,fill opacity=1 ]  (475,110) .. controls (475,107.24) and (477.24,105) .. (480,105) .. controls (482.76,105) and (485,107.24) .. (485,110) .. controls (485,112.76) and (482.76,115) .. (480,115) .. controls (477.24,115) and (475,112.76) .. (475,110) -- cycle ;
\draw  [fill={rgb, 255:red, 0; green, 0; blue, 0 }  ,fill opacity=1 ]  (515,110) .. controls (515,107.24) and (517.24,105) .. (520,105) .. controls (522.76,105) and (525,107.24) .. (525,110) .. controls (525,112.76) and (522.76,115) .. (520,115) .. controls (517.24,115) and (515,112.76) .. (515,110) -- cycle ;
\draw  [fill={rgb, 255:red, 0; green, 0; blue, 0 }  ,fill opacity=1 ] (555,110) .. controls (555,107.24) and (557.24,105) .. (560,105) .. controls (562.76,105) and (565,107.24) .. (565,110) .. controls (565,112.76) and (562.76,115) .. (560,115) .. controls (557.24,115) and (555,112.76) .. (555,110) -- cycle ;

\draw (440,70) node [anchor=north west][inner sep=0.75pt]    {$A^{\bullet-8 }(\DD_{5}( 1))$};

\end{tikzpicture}      

\caption{Shifted Hasse sub-diagrams corresponding to the fixed-point components $\DD_5(5)$ and $\DD_5(1)$.}
\label{fig:hasse}
	\end{figure}
\end{remark}

Note that these arguments can be redone for an arbitrary generalized Grassmannian of exceptional type $\DE_7, \DE_8, \DF_4, \DG_2$. The recipe is as follows.
\begin{enumerate}
	\item Choose the generalized Grassmannian $\cD(j)$ such that $H_j$ provides the $\C^*$-action of minimal bandwidth. By Lemma \ref{lem:ePsource}, the sink is an isolated point.
	\item Because in the Weyl group of $G$ we have $w_\circ=-\id$, also the source is an isolated fixed point.
	\item Now, use the compass to understand which are the fixed-point components next to the sink and the source. They are the VMRT of $\cD(j)$.
	\item At this point, because the minimal bandwidth of $\cD(j)$ is at most $4$, it may remain to understand the middle fixed-point component. Arguing as in the case of $\DE_6(6)$, one can understand what is the missing fixed-point component.
	\item If we are interested in another RH variety $\cD(k)$, then $\cD(k)$ parametrizes a family of certain subvarieties of $\cD(j)$ and one can induce the fixed-point components of the action on $\cD(k)$ using the ones of $\cD(j)$ as we have done in the section of classic cases.
\end{enumerate}

\begin{remark}
	The same picture of Figure \ref{fig:hasse} appears in \cite{MR3210405}. In the language of Pech, the Hasse diagram describes the $P_1$-orbits in $\DE_6(6)$, which are nothing but vector bundles over the fixed-point components $Y_i$. Our picture describes, in the language of Pech, the $P_6$-orbits in $\DE_6(6)$. In fact, let us note that by Remark \ref{rem:PZgrading} we have $G_0 \subset P_6$ as the reductive part. As pointed out by Perrin in \cite[Proposition 5]{MR1881572} the $P_6$-orbits are of the form $P_6 wP_6/P_6$ for $w \in W(\DE_6)$ and there is an affine fibration
	\[
	f: P_6 wP_6/P_6 \to G_0/(G_0 \cap \conj_w(P_6))\simeq G^\perp /(G^\perp \cap \conj_w(P_6)).
	\]
	Thanks to Theorem \ref{thm:trasvComp}, $Y_i$ are of the form $G^\perp /(G^\perp \cap \conj_w(P_6))$. 
	
	On the other hand we have  the Theorem of Bia\l ynicki-Birula (see the statement in \cite{RomanoW}): define for a fixed-point component $Y \in \cY$ the cells
	\[
	X^\pm(Y):= \left\{ x \in X : \lim_{t \to 0} t^{\pm 1} \cdot x \in Y\right\},
	\] 
	then there is $\C^*$-isomorphism $X^\pm(Y) \simeq N^\pm(Y|X)$ and the natural maps $X^\pm(Y) \to Y$ are $\C^{\nu_\pm(Y)}$-fibrations. Now, thanks to the first formula of \cite{MR718936}, we have 
	\[
	H_{\bullet-2\nu_\pm(Y)}(X^\pm(Y), \Z) \simeq H_{\bullet-2\nu_\pm(Y)}(Y,\Z)
	\]
	and we recover our description in terms of Hasse diagrams via the fixed-point components.
\end{remark}

\bibliographystyle{plain}
\bibliography{referenze.bib}

\begin{thebibliography}{10}

\bibitem{article:bb2}
A.~Bia\l~ynicki Birula.
\newblock On fixed points of torus actions on projective varieties.
\newblock {\em Bull. Acad. Polon. Sci. S\'{e}r. Sci. Math. Astronom. Phys.},
  22:1097--1101, 1974.

\bibitem{article:bb}
Andrzej Bia{\l}ynicki-Birula.
\newblock Some theorems on actions of algebraic groups.
\newblock {\em Ann. of Math. (2)}, 98:480--497, 1973.

\bibitem{bourbaki6}
Nicolas Bourbaki.
\newblock {\em \'{E}l\'ements de math\'ematique. {F}asc. {XXXIV}. {G}roupes et
  alg\`ebres de {L}ie. {C}hapitre {IV}: {G}roupes de {C}oxeter et syst\`emes de
  {T}its. {C}hapitre {V}: {G}roupes engendr\'es par des r\'eflexions.
  {C}hapitre {VI}: syst\`emes de racines}.
\newblock Actualit\'es Scientifiques et Industrielles, No. 1337. Hermann,
  Paris, 1968.

\bibitem{BuWeWi}
Jaros{\l}aw Buczy{\'n}ski, Jaros{\l}aw~A. Wi{\'s}niewski, and Andrzej Weber.
\newblock Algebraic torus actions on contact manifolds.
\newblock {\em To appear in J. Differ. Geom. Preprint ArXiv:1802.05002}, 2018.

\bibitem{MR718936}
J.~B. Carrell and R.~M. Goresky.
\newblock A decomposition theorem for the integral homology of a variety.
\newblock {\em Invent. Math.}, 73(3):367--381, 1983.

\bibitem{Carrell2002}
James~B. Carrell.
\newblock Torus actions and cohomology.
\newblock In {\em Algebraic quotients. {T}orus actions and cohomology. {T}he
  adjoint representation and the adjoint action}, volume 131 of {\em
  Encyclopaedia Math. Sci.}, pages 83--158. Springer, Berlin, 2002.

\bibitem{polyCoxeter}
H.~S.~M. Coxeter.
\newblock {\em Regular polytopes}.
\newblock Dover Publications, Inc., New York, third edition, 1973.

\bibitem{fultonHarris}
William Fulton and Joe Harris.
\newblock {\em Representation theory}, volume 129 of {\em Graduate Texts in
  Mathematics}.
\newblock Springer-Verlag, New York, 1991.

\bibitem{article:gkm}
Mark Goresky, Robert Kottwitz, and Robert MacPherson.
\newblock Equivariant cohomology, {K}oszul duality, and the localization
  theorem.
\newblock {\em Invent. Math.}, 131(1):25--83, 1998.

\bibitem{MR664117}
V.~Guillemin and S.~Sternberg.
\newblock Convexity properties of the moment mapping.
\newblock {\em Invent. Math.}, 67(3):491--513, 1982.

\bibitem{humLie}
James~E. Humphreys.
\newblock {\em Introduction to {L}ie algebras and representation theory},
  volume~9 of {\em Graduate Texts in Mathematics}.
\newblock Springer-Verlag, New York, 1978.

\bibitem{humCoxeter}
James~E. Humphreys.
\newblock {\em Reflection groups and {C}oxeter groups}, volume~29 of {\em
  Cambridge Studies in Advanced Mathematics}.
\newblock Cambridge University Press, Cambridge, 1990.

\bibitem{ManivelCayleyPlane}
Atanas Iliev and Laurent Manivel.
\newblock The {C}how ring of the {C}ayley plane.
\newblock {\em Compos. Math.}, 141(1):146--160, 2005.

\bibitem{iversen}
Birger Iversen.
\newblock A fixed point formula for action of tori on algebraic varieties.
\newblock {\em Invent. Math.}, 16:229--236, 1972.

\bibitem{article:kklv}
Friedrich Knop, Hanspeter Kraft, Domingo Luna, and Thierry Vust.
\newblock Local properties of algebraic group actions.
\newblock In {\em Algebraische {T}ransformationsgruppen und
  {I}nvariantentheorie}, volume~13 of {\em DMV Sem.}, pages 63--75.
  Birkh\"{a}user, Basel, 1989.

\bibitem{ManivelFreud}
J.~M. Landsberg and L.~Manivel.
\newblock The projective geometry of {F}reudenthal's magic square.
\newblock {\em J. Algebra}, 239(2):477--512, 2001.

\bibitem{Landsberg2003}
Joseph~M. Landsberg and Laurent Manivel.
\newblock On the projective geometry of rational homogeneous varieties.
\newblock {\em Comment. Math. Helv.}, 78(1):65--100, 2003.

\bibitem{manivel2020topics}
Laurent Manivel.
\newblock Topics on the {G}eometry of {R}ational {H}omogeneous {S}paces.
\newblock {\em Acta Math. Sin. (Engl. Ser.)}, 36(8):851--872, 2020.

\bibitem{montagard2009regular}
Pierre-Louis Montagard and Nicolas Ressayre.
\newblock Regular lattice polytopes and root systems.
\newblock {\em Bull. Lond. Math. Soc.}, 41(2):227--241, 2009.

\bibitem{campanaSurvey}
Roberto Mu{\~n}oz, Gianluca Occhetta, Luis~E. Sol{\'a}~Conde, Kiwamu Watanabe,
  and Jaros{\l}aw~A. Wi\'sniewski.
\newblock A survey on the {C}ampana-{P}eternell conjecture.
\newblock {\em Rend. Istit. Mat. Univ. Trieste}, 47:127--185, 2015.

\bibitem{occhetta2020high}
Gianluca Occhetta, Eleonora~A. Romano, Luis E.~Sol\'{a} Conde, and Jaros\l
  aw~A. Wi\'{s}niewski.
\newblock High rank torus actions on contact manifolds.
\newblock {\em Selecta Math. (N.S.)}, 27(1):Paper No. 10, 33, 2021.

\bibitem{smallBandwidth}
Gianluca Occhetta, Eleonora~A. Romano, Luis E.~Sol\'a Conde, and Jaros{\l}aw~A.
  Wi\'sniewski.
\newblock Small bandwidth $\mathbb{C}^*$-actions and birational geometry.
\newblock Preprint ArXiv:1911.12129, 2019.

\bibitem{pasquierPrinc}
B.~Pasquier.
\newblock On some smooth projective two-orbit varieties with picard number 1.
\newblock {\em Mathematische Annalen}, 344:963--987, 2009.

\bibitem{MR3210405}
Cl\'{e}lia Pech.
\newblock Quantum product and parabolic orbits in homogeneous spaces.
\newblock {\em Comm. Algebra}, 42(11):4679--4695, 2014.

\bibitem{MR1881572}
Nicolas Perrin.
\newblock Courbes rationnelles sur les vari\'{e}t\'{e}s homog\`enes.
\newblock {\em Ann. Inst. Fourier (Grenoble)}, 52(1):105--132, 2002.

\bibitem{RomanoW}
Eleonora~A. Romano and Jaros{\l}aw~A. Wi{\'s}niewski.
\newblock Adjunction for varieties with a $\mathbb{C}^*$ action.
\newblock {\em arXiv preprint:1904.01896. To appear in Transformation Groups},
  2019.

\bibitem{tevelev2006projective}
Evgueni~A. Tevelev.
\newblock {\em Projective duality and homogeneous spaces}, volume 133 of {\em
  Encyclopaedia of Mathematical Sciences}.
\newblock Springer-Verlag, Berlin, 2005.

\end{thebibliography}

\end{document}